\documentclass[11pt]{amsart}

\usepackage[letterpaper]{geometry}

\usepackage{amsmath,amssymb,amsthm}
\usepackage{mathrsfs}
\usepackage{mathtools}
\usepackage{bm}
\usepackage{graphicx}
\usepackage{xcolor}
\usepackage{tikz-cd}
\usetikzlibrary{matrix}

\usepackage[pagebackref=true]{hyperref}
\hypersetup{
    colorlinks=true,
    allcolors=blue
}

\newtheorem{theorem}{Theorem}[section]
\newtheorem{lemma}[theorem]{Lemma}
\newtheorem{proposition}[theorem]{Proposition}

\newtheorem{corollary}[theorem]{Corollary}

\theoremstyle{definition}
\newtheorem{definition}[theorem]{Definition}
\newtheorem{example}[theorem]{Example}

\theoremstyle{remark}
\newtheorem{remark}[theorem]{Remark}

\definecolor{oro}{rgb}{0.80,0.58,0.05}
\definecolor{darkgreen}{rgb}{0.0,0.4,0.0}
\definecolor{lav}{rgb}{0.80,0.0,0.95}

\newcommand{\Z}{\mathbb{Z}}
\newcommand{\C}{\mathbb{C}}
\newcommand{\R}{\mathbb{R}}
\newcommand{\T}{\mathbb{T}}
\newcommand{\Lam}{\pmb{\Lambda}}

\DeclareMathOperator{\rank}{rank}

\newcommand{\lamkn}[2]{\lambda^{#2}_{#1}}
\newcommand{\convexhull}[1]{ \operatorname{Conv}(#1)}
\newcommand{\leaf}[1]{\mathcal{L}_{#1}}
\newcommand{\strat}[1]{E_{#1}}

\newcommand{\stratSS}[1]{E_{#1}^*}
\newcommand{\stratS}[1]{\stratSS{#1}}

\title[Holomorphic Linear $\C^k$-Actions]{Holomorphic Linear $\C^k$-Actions, Trace Foliations, and Higher-Rank Poincaré Dynamics}
\date{}
\author{Aubin Arroyo, Carlos Cabrera, José Seade, and Alberto Verjovsky}

\address{Instituto de Matem\'aticas, Universidad Nacional Aut\'onoma de M\'exico\\
Apartado Postal 273, Administraci\'on de Correos No.~3\\
C.P. 62251 Cuernavaca, Morelos, M\'exico}

\email{[aubinarroyo@im.unam.mx]}
\email{[carloscabrerao@im.unam.mx]}
\email{[jseade@im.unam.mx]}
\email{[alberto@matcuer.unam.mx]}

\begin{document}

\begin{abstract}
	We study the orbit decomposition on $\C^n$ generated by diagonal holomorphic $\C^k$-actions in the higher-rank setting of the classical Poincaré--Siegel dichotomy for linear vector fields.
	The coordinate stratification determines the dimensions and isotropy groups of the leaves and, under a maximal-rank condition, gives a precise description of the orbit structure on every coordinate stratum.

	For configurations in the Poincaré domain, a separating real direction provides a global conical model of the punctured orbit foliation by its traces on Euclidean spheres.
	We construct the corresponding radially reparametrized action on a sphere, describe its leaves as homogeneous spaces, and prove that orbit closures are constrained by coordinate supports.
	In particular, a limit point cannot acquire a new nonzero coordinate, although nonclosed trace leaves may also accumulate within a fixed support stratum.
	We show that the geometry of the weight configuration determines the complex dimensions of the leaves, whereas the arithmetic of their effective isotropy groups determines their diffeomorphism types.
	This gives rise to a threshold phenomenon across the coordinate stratification: the diffeomorphism type of the leaves is rigid in the low- and high-dimensional regimes, but becomes arithmetically unstable in the intermediate range $k<|I|<2k$.
	Finally, we show that these singular foliations admit canonical local transverse holomorphic structures in the spirit of Haefliger's transverse geometry for regular foliations.
	These structures determine intrinsic transverse pseudogroups, yielding a well-defined local transverse holomorphic geometry for the orbit foliation.
\end{abstract}

\subjclass[2020]{Primary 37F75; Secondary 32M25, 37C85, 57R30.}
\keywords{Holomorphic $\C^k$-actions; diagonal actions; Poincaré domain;
	holomorphic foliations; singular foliations; coordinate stratification;
	trace foliations; orbit closures; isotropy groups; homogeneous leaves;
	Euclidean--toral leaf topology; arithmetic instability}

\maketitle

\section{Introduction}

Holomorphic flows generated by linear vector fields provide some of the most elementary models in local complex dynamics.
Their higher-rank counterparts, given by diagonal holomorphic actions of $\C^k$, lead to singular orbit foliations whose geometry is controlled by the associated weight configuration.

The Poincaré--Siegel dichotomy, originating in the work of Poincaré and Dulac and developed further by Arnold \cite{Arnold}, Guckenheimer \cite{Gu}, Camacho, Kuiper, and Palis \cite{CKP}, among others, remains one of the guiding principles in the subject.
In higher rank, the two regimes give rise to different geometric phenomena.
The Poincaré regime, in particular, provides a natural framework for studying the topology, stratification, and transverse geometry of orbit foliations.

In this article, we study diagonal holomorphic actions of the additive complex Lie group $\C^k$ on $\C^n$.
These actions are generated by commuting families of diagonal linear holomorphic vector fields and are determined by a weight configuration
\[
	\Lam=\{\Lambda_1,\dots,\Lambda_n\}\subset\C^k.
\]
They are given by
\[
	(T,z)\longmapsto
	\bigl(
	z_1e^{\langle\Lambda_1,T\rangle},
	\dots,
	z_ne^{\langle\Lambda_n,T\rangle}
	\bigr),
	\qquad T\in\C^k.
\]

The associated orbit decomposition is a Stefan--Sussmann singular foliation in the smooth sense.
Its infinitesimal generators span an involutive coherent holomorphic subsheaf of the tangent sheaf, and on every coordinate stratum the resulting distribution has constant rank and defines a regular holomorphic foliation.
Its leaf dimensions and isotropy groups may vary from point to point but are constant on each stratum of the coordinate-plane stratification.
This setting simultaneously extends the classical theory of rank-one linear flows and the study of diagonal complex actions arising in the theory of L\'opez de Medrano--Verjovsky--Meersseman manifolds \cite{LdM-V, Meersseman, Verjovsky, Santiago2}.
In particular, configurations in the Siegel domain were introduced by Meersseman in the study of compact complex manifolds constructed from diagonal actions.
We focus on configurations in the Poincaré domain and study the resulting orbit foliations through their global geometry, the topology and asymptotic behavior of their leaves, and their canonical transverse holomorphic structures.

The first part of the paper is devoted to the geometry along the leaves of the orbit foliation, including their dimensions, topology, asymptotic behavior, and diffeomorphism types.
The final part is devoted to the transverse holomorphic geometry of the orbit foliation.
We construct Stefan--Sussmann transversals, local product models, and intrinsic transverse pseudogroups, showing that the resulting transverse holomorphic geometry is independent of all auxiliary choices.

A central theme of the paper is the interaction between the geometry and the arithmetic of the weight configuration.
The geometry of the weight configuration determines the complex dimensions of the leaves, whereas the arithmetic of the corresponding effective isotropy groups determines their diffeomorphism types.

We first describe the global organization of the orbit foliation through the coordinate stratification of $\C^n$.
The pattern of vanishing coordinates determines both the dimension of an orbit and its isotropy subgroup, and the coordinate strata provide a natural decomposition of the ambient space into regions governed by subconfigurations of the weights.
Under a maximal-rank genericity condition, strata of complex dimension at most $k$ consist of a single leaf, whereas the higher-dimensional strata are foliated by leaves of maximal complex dimension.

The geometry of the orbit foliation is largely governed by the position of the origin relative to the convex hull of the weight configuration $\Lam$.
If
\[
	0\notin\operatorname{Conv}(\Lam),
\]
we say that $\Lam$ lies in the \emph{Poincaré domain}; otherwise, when $0\in\operatorname{Conv}(\Lam)$, it lies in the \emph{Siegel domain}.

The two regimes lead to markedly different orbit geometries.
In the Siegel case, suitable open subsets may admit compact quotients by the diagonal action, a phenomenon underlying the constructions associated with L\'opez de Medrano, Verjovsky, and Meersseman.
In the Poincaré case, by contrast, the origin is dynamically isolated, and the punctured orbit foliation admits a conical description.

More precisely, every nontrivial orbit meets each Euclidean sphere transversely.
Its intersections with a sphere define a singular foliation, which we call the \emph{trace foliation}.
This spherical foliation encodes the topology and asymptotic behavior of the punctured ambient dynamics.

One of the main ingredients, Lemma~\ref{thm:trace-main}, gives a homogeneous description of the trace foliation.
For every admissible direction $\xi$, a codimension-one real linear subgroup $H_\xi\subset\C^k$ acts on the sphere, and its orbits are precisely the trace leaves.
Thus, each trace leaf is an immersed homogeneous space of the form $H_\xi/\Gamma_z$, where $\Gamma_z$ is its stabilizer subgroup.
Moreover, the punctured ambient foliation is homeomorphic to the open cone over the trace foliation.
When $k=1$, this recovers the classical conical picture described by Guckenheimer \cite{Gu} for linear holomorphic flows.

In Lemma~\ref{thm:omega-strata}, we analyze orbit closures of trace leaves.
Diagonality imposes a universal support-monotonicity constraint: a limit point cannot acquire a new nonzero coordinate.
This does not exclude same-support recurrence.
Nonclosed trace leaves may accumulate on other trace leaves with the same support, for example through quasiperiodic winding in compact factors.

We also study how the orbit foliation changes under perturbations of the weight configuration.
The leaves are quotients of complex vector groups by their effective isotropy groups and are therefore classified, up to diffeomorphism, by the rank of the latter.
Thus, while the geometry of the weight configuration determines the complex dimensions of the leaves, the arithmetic of their effective isotropy groups determines their diffeomorphism types.
This gives rise to a threshold phenomenon across the coordinate stratification: the diffeomorphism type of the leaves is rigid in the low- and high-dimensional regimes, but becomes arithmetically unstable in the intermediate range $k<|I|<2k$.

The orbit decomposition also admits an intrinsic local transverse holomorphic structure.
It is described by Stefan--Sussmann transversals, local product models, and transverse pseudogroups, all uniquely determined up to germ biholomorphism.

In the Poincaré regime, the same construction applies to the (real) trace foliation on the sphere, endowing it with a natural canonical transverse holomorphic structure.

Thus, the classical theory of rank-one linear holomorphic flows appears as part of a broader framework of higher-rank complex actions in which the coordinate stratification governs the geometry, topology, dynamics, and canonical holomorphic transverse structures of the orbit and trace foliations.

The paper is organized as follows.
Section~\ref{sec:preliminaries} introduces diagonal $\C^k$-actions, isotropy groups, and the coordinate stratification of the associated orbit foliation.
Section~\ref{sec:PoincareDynamics} studies the Poincaré domain and develops the geometry of trace foliations on spheres.
Section~\ref{sec:stability} studies the topology of the orbit foliation, establishes a threshold phenomenon for the diffeomorphism types of the leaves, and derives the failure of stratified topological stability.
Section~\ref{sec:transverse} develops the natural stratified holomorphic transverse geometry of the orbit and trace foliations.

\section{Diagonal \texorpdfstring{$\C^k$}{C\string^k}-Actions and Leaf Geometry}\label{sec:preliminaries}

The class of holomorphic actions of $\C^k$ on $\C^n$ considered throughout the paper is determined by vectors $\Lambda_j=(\lamkn{1}{j},\dots,\lamkn{k}{j})\in\C^k$, for $j\in\{1,\ldots,n\}$.
We call $\Lam=\{\Lambda_1,\dots,\Lambda_n\}$ the weight configuration of the action.

The configuration $\Lam$ determines a diagonal holomorphic action of the additive complex Lie group $\C^k$ on $\C^n$, defined by
\[
	\Phi:\C^k\times\C^n\longrightarrow\C^n, \qquad \Phi(T,z)= \bigl( z_1e^{\langle\Lambda_1,T\rangle}, \dots, z_ne^{\langle\Lambda_n,T\rangle} \bigr),
\]
where $T=(t_1,\dots,t_k)\in\C^k$ and
\[
	\langle\Lambda_j,T\rangle = \sum_{\nu=1}^k \lamkn{\nu}{j} \, t_\nu .
\]

The corresponding infinitesimal generators are the $k$ commuting holomorphic vector fields
\begin{equation} \label{eq:infinitesimalGenerators}
	X_\nu=\sum_{j=1}^n \lamkn{\nu}{j} \, z_j \, \frac{\partial}{\partial z_j}, \qquad \nu=1,\dots,k.
\end{equation}

The orbits of this action define a Stefan--Sussmann singular foliation $\mathcal F(\Lam)$ in the smooth sense.
The holomorphic vector fields in~\eqref{eq:infinitesimalGenerators} span an involutive coherent subsheaf of $T_{\C^n}$; on every coordinate stratum its rank is constant and it defines a regular holomorphic foliation.
Writing $\convexhull{\Lam}$ for the convex hull of $\Lam$, the convex geometry of the weight configuration distinguishes two basic regimes, according to whether $0\in\convexhull{\Lam}$ or not.

\begin{definition}
	The configuration $\Lam$ is said to lie in the \emph{Poincaré domain} if $0 \notin \convexhull{\Lam}$, and in the \emph{Siegel domain} if $0 \in \convexhull{\Lam}$.
	The associated action is called a \emph{Poincaré action} or a \emph{Siegel action}, respectively.
\end{definition}

These notions extend the classical rank-one dichotomy.
In the Poincaré domain, a real affine hyperplane strictly separates the origin from the weights.
Equivalently, there is a real direction with strictly positive real pairing against every weight; this produces radial transversality and the conical behavior studied below.

\medskip
From now on we assume that $\Lam$ lies in the Poincaré domain and
satisfies the following genericity condition: every subconfiguration
$\{\Lambda_j:j\in I\}$ has maximal possible rank,
\[
	\rank_{\C}\{\Lambda_j:j\in I\}=\min\{|I|,k\},
	\qquad I\subset\{1,\dots,n\}.
\]
A configuration satisfying this condition will be called generic.

The origin is always a singular point of the foliation.
More generally, the dimension of the orbit through a point is governed by the weights corresponding to its nonzero coordinates.

To describe the geometry of the orbit foliation, we associate to each point the set of its nonzero coordinates.
For $z=(z_1,\dots,z_n)\in\C^n$, let
\[
	I(z)=\{j\in\{1,\dots,n\}: z_j\neq 0\}
\]
denote the support of $z$.
Thus $I(z)$ specifies the coordinates that are active along the orbit of $z$.
Thus, the orbit through $z$ is singular precisely when the weights $\{\Lambda_j:j\in I(z)\}$ fail to span $\C^k$.

The notion of support induces a natural coordinate stratification of $\C^n$, which we use throughout the paper.
For each subset $I\subset\{1,\dots,n\}$, set
\[
	\strat{I}=\{w\in\C^n:w_j=0\text{ for }j\notin I\}, \qquad \stratSS{I}=\{w\in \strat{I}:w_j\neq0\text{ for }j\in I\}.
\]
Then
$\stratSS{I}$
is the set of points with support exactly $I$, and
\begin{equation}\label{eq:stratification}
	\C^n=\bigsqcup_{I\subset\{1,\dots,n\}}\stratSS{I}.
\end{equation}
%For convenience we write $\stratS{I}:=\stratSS{I}$. {\color{oro} Usamos ambas notaciones un montón. Creo que hay que usar solo una}

Each stratum $\stratS{I}$ is invariant under the action: zero coordinates remain zero along every orbit, while nonzero coordinates remain nonzero.
Hence, the foliation $\mathcal F(\Lam)$ is naturally stratified by the coordinate decomposition.

Let $z\in\C^n$.
The isotropy subgroup of $z$ is
\[
	G_z=\{T\in\C^k:\Phi(T,z)=z\}.
\]

The coordinate stratification gives a simple property of the isotropy groups: In terms of the support $I(z)$ the isotropy is given by
\[
	G_z= \left\{ T\in\C^k: e^{\langle\Lambda_j,T\rangle}=1 \text{ for all }j\in I(z) \right\}.
\]

The leaf $\leaf{z}$ through $z$ is naturally identified with the homogeneous space
\[
	\leaf{z}\simeq \C^k \big/ G_z.
\]
Since $\C^k$ is Abelian, every subgroup is normal.
Consequently each leaf is canonically a complex Abelian quotient Lie group.

Let $I=I(z)$ and set $m=|I|$.
Then the orbit through $z$ is contained in the coordinate stratum $\stratSS{I}$, which is biholomorphic to $(\C^*)^m$.
%
%For any subset $J\subset{1,\ldots,n}$, define
%\[
%	q(J):=\rank_{\C}\{\Lambda_j:j \in J\},
%\]
%that is, the dimension of the complex vector subspace of $\C^k$ spanned by the weights indexed by $J$. For $z \in \C^n$, we set $q(z) := q(I(z))$. Thus, $q(z)$ is the rank of the family of active weights at $z$.
%
Also, let
\[
	q(z):=\rank_{\C}\{\Lambda_j:j \in I(z)\},
\]
that is, the dimension of the complex vector subspace of $\C^k$ generated by the active weights.

\begin{proposition}\label{prop:leaf-dimension}
	Let $z\in\C^n$ and let $I(z)$ be its support.
	Then the leaf $\leaf{z}$ through $z$ has complex dimension
	\[
		\dim_\C(\leaf{z})= q(z).
	\]
\end{proposition}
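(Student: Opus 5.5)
The plan is to compute the dimension of the leaf as the rank of the orbit map at the identity, and to make this rigorous using the homogeneous structure $\leaf{z}\simeq\C^k/G_z$.

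\textbf{Step 1: the tangent space of the orbit.} Consider the orbit map $\Phi_z:\C^k\to\C^n$, $T\mapsto\Phi(T,z)$. Its differential at $T=0$ sends the $\nu$-th basis vector to $X_\nu(z)$, with $X_\nu$ as in~\eqref{eq:infinitesimalGenerators}. Coordinates $j\notin I(z)$ contribute nothing, so
\[
d\Phi_z(0)(S)=\bigl(\langle\Lambda_j,S\rangle z_j\bigr)_{j\in I(z)}
\]
inside the coordinates of $\strat{I(z)}$. Since $z_j\neq0$ for $j\in I(z)$, we may rescale each coordinate by the nonzero factor $z_j$. This does not change the rank, so the rank equals that of the linear map $S\mapsto(\langle\Lambda_j,S\rangle)_{j\in I(z)}$. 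Its matrix has rows $\Lambda_j$, $j\in I(z)$. Row rank equals column rank, so this rank is $\rank_\C\{\Lambda_j:j\in I(z)\}=q(z)$. (The pairing is bilinear, not Hermitian, so there is no conjugation issue.)

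\textbf{Step 2: constant rank.} By equivariance, $\Phi_z(T+S)=\Phi(T,\Phi_z(S))$. Hence $d\Phi_z(T)=d\Phi(T,\cdot)\circ d\Phi_z(0)$, and since $\Phi(T,\cdot)$ is a linear automorphism of $\C^n$ (diagonal with nonzero entries), the rank is $q(z)$ at every $T$.

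\textbf{Step 3: identify the leaf dimension.} The isotropy $G_z$ is a closed subgroup of $\C^k$. Its Lie algebra is $\ker d\Phi_z(0)$: indeed $G_z$ is cut out by $e^{\langle\Lambda_j,T\rangle}=1$, $j\in I(z)$, so its identity component is the complex subspace $\{\langle\Lambda_j,T\rangle=0,\ j\in I(z)\}$. This subspace has complex dimension $k-q(z)$, and the rest of $G_z$ is a discrete lattice-type part. Therefore $\C^k/G_z$ is a complex Lie group of dimension $k-(k-q(z))=q(z)$. The injective immersion $\C^k/G_z\to\C^n$ induced by $\Phi_z$ has image $\leaf{z}$, which gives $\dim_\C\leaf{z}=q(z)$.

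The only mildly delicate point is Step 3: one must check that the identity component of $G_z$ is exactly the kernel subspace, so that the discrete part does not affect the dimension. I would argue this by noting that $T\mapsto(\langle\Lambda_j,T\rangle)_{j\in I(z)}$ maps $G_z$ into $(2\pi i\Z)^{|I(z)|}$, a discrete set. The connected component of $0$ in $G_z$ therefore lies in the kernel, and the kernel is clearly contained in $G_z$.
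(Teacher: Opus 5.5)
Your proof is correct and follows the paper's argument. Step 1 is exactly the paper's computation: discard the coordinates outside $I(z)$ and rescale by the nonzero $z_j$. Steps 2 and 3 (constant rank by equivariance, and $G_z^0=\ker A_{I(z)}$, so that $\C^k/G_z$ immerses with complex dimension $q(z)$) make rigorous the paper's one-line claim that the orbit's dimension equals that of its tangent space at $z$, and they anticipate the kernel computation in Lemma~\ref{lem:isotropy-splitting}.
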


\begin{proof}
	The differential of the action at $z$,
	\[
		d\Phi_z:\C^k\longrightarrow T_z\C^n,
	\]
	is generated by the tangent vectors
	\[
		\left(\lamkn{\nu}{1}z_1,\ldots,\lamkn{\nu}{n}z_n\right), \qquad \nu=1,\dots,k.
	\]
	Only the coordinates with $j\in I(z)$ contribute to the rank.
	Since $z_j\neq0$ for every $j\in I(z)$, multiplying the $j$-th coordinate by the nonzero scalar $z_j$ does not change the rank.
	Therefore, the tangent space to the orbit at $z$ has complex dimension equal to
	\[
		\rank_{\C}\{\Lambda_j:j\in I(z)\}.
	\]
	Since the complex dimension of the orbit equals the dimension of its
	tangent space, the result follows.
\end{proof}

It follows that dimension drops occur precisely along those strata $\stratSS{I}$ for which the active weight configuration $\{\Lambda_j:j\in I\}$ has rank strictly smaller than $k$.

On the principal stratum $(\C^*)^n$, all coordinates are nonzero.
Hence, the isotropy subgroup is constant on this stratum.
We denote it by $G_{\mathrm{pr}}$ and call it the \emph{principal isotropy subgroup}.
The leaves contained in this Zariski open stratum are called the \emph{principal leaves}.

Denote by $\T ^r = \mathbb{R}^r / \Z^r$ the real torus of dimension $r$.
The following proposition gives a general description of the topology of principal leaves.
Under the genericity condition, $\mathrm{rank}_{\C}\{\Lambda_1,\dots,\Lambda_n\}=k$, so the kernel $K_{\{1,\dots,n\}}=\ker A_{\{1,\dots,n\}}=\{0\}$; by Lemma~\ref{lem:isotropy-splitting} below, this implies that $G_{\mathrm{pr}}$ is discrete.
\begin{proposition} \label{prop:principal-topology}
	Assume the genericity condition.
	If the principal isotropy subgroup $G_{\mathrm{pr}}\subset\C^k$ has rank $r$ (as a free abelian group), then every principal leaf is diffeomorphic to
	\[
		\R^{2k-r}\times\T^r.
	\]
\end{proposition}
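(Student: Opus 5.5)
The plan is to identify each principal leaf with the quotient $\C^k/G_{\mathrm{pr}}$ and then classify quotients of $\R^{2k}$ by discrete subgroups. For $z\in(\C^*)^n$ the orbit map $T\mapsto\Phi(T,z)$ factors through an injective immersion
\[
	\C^k/G_{\mathrm{pr}}\longrightarrow\leaf{z}.
\]
By Proposition~\ref{prop:leaf-dimension} and genericity, $\dim_\C\leaf{z}=q(z)=k$, so the differential of the orbit map is injective. Since the leaf carries its intrinsic (leaf) manifold structure, the leaf is the image of this injective immersion and is diffeomorphic to $\C^k/G_{\mathrm{pr}}$. The text preceding the statement, together with Lemma~\ref{lem:isotropy-splitting}, shows that $G_{\mathrm{pr}}$ is discrete. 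Alternatively, discreteness follows directly: $G_{\mathrm{pr}}$ is a closed subgroup whose Lie algebra is $\{T:\langle\Lambda_j,T\rangle=0\ \forall j\}=\{0\}$ by the rank-$k$ condition.

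Next, view $\C^k$ as the real vector space $\R^{2k}$. A discrete subgroup of a real vector space is a lattice in its real span. That is, there exist $\R$-linearly independent vectors $v_1,\dots,v_r$ with $G_{\mathrm{pr}}=\Z v_1\oplus\cdots\oplus\Z v_r$, where $r$ is the rank of $G_{\mathrm{pr}}$ as a free abelian group; in particular $r\le 2k$. I would either quote this classical structure theorem or sketch its proof: choose a maximal $\R$-independent subset of $G_{\mathrm{pr}}$; by discreteness it generates a finite-index subgroup, and one then reduces to a basis by the standard successive-minima or fundamental-domain argument.

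Finally, extend $v_1,\dots,v_r$ to an $\R$-basis $v_1,\dots,v_{2k}$ of $\R^{2k}$. The real-linear isomorphism $\R^{2k}\to\R^{2k}$ sending $v_i$ to $e_i$ carries $G_{\mathrm{pr}}$ onto $\Z^r\times\{0\}$. It therefore induces a diffeomorphism
\[
	\C^k/G_{\mathrm{pr}}\cong(\R^r/\Z^r)\times\R^{2k-r}=\T^r\times\R^{2k-r},
\]
which is what we want.

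The main obstacle is the lattice structure step, namely that a discrete subgroup has an $\R$-independent $\Z$-basis. Discreteness is essential here: a non-discrete group such as $\Z+\Z\sqrt2\subset\R$ is free abelian of rank $2$ but does not have this property. I would cite this as standard (e.g.\ Bourbaki, \emph{General Topology} VII) rather than reprove it.
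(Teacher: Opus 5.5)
Your proposal is correct and follows the same route as the paper. Both identify the principal leaf with $\C^k/G_{\mathrm{pr}}$, use discreteness of $G_{\mathrm{pr}}$ to bring it to $\Z^r\times\{0\}\subset\R^{2k}$ by a real linear change of coordinates, and read off $\R^{2k-r}\times\T^r$. You make explicit three steps the paper leaves tacit: the injective-immersion identification of the leaf, the Lie-algebra proof of discreteness, and the structure theorem giving an $\R$-independent $\Z$-basis of a discrete subgroup. Your equality $q(z)=k$, like the paper's, silently uses $n\ge k$, but this is forced anyway by the hypothesis that $G_{\mathrm{pr}}$ is free abelian.
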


\begin{proof}
	The principal leaf is naturally identified with $\C^k/G_{\mathrm{pr}}$.
	As a real Lie group, $\C^k\simeq\R^{2k}$.
	Since $G_{\mathrm{pr}}$ is a discrete subgroup of rank $r$, it can be identified, after a real linear change of coordinates, with $\Z^r\subset\R^{2k}$.
	Hence
	\[
		\C^k/G_{\mathrm{pr}} \simeq \R^{2k}/\Z^r \simeq \R^{2k-r}\times\T^r.
	\]
\end{proof}

We now turn to the topology of the leaves contained in the lower coordinate strata.

\begin{theorem}\label{thm:coordinate-strata}
	Assume that $\Lam$ is a generic configuration in the Poincaré domain.
	Let $I\subset\{1,\dots,n\}$ with $|I|=m$, and consider the coordinate stratum $\stratSS{I}\subset\C^n$.
	Then:
	\begin{enumerate}
		\item if $m>k$, then $\stratSS{I}$ is foliated by mutually diffeomorphic
		      leaves of complex dimension $k$;
		\item if $m\le k$, then $\stratSS{I}$ consists of a single leaf.
	\end{enumerate}
\end{theorem}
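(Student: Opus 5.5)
The plan is to work entirely inside the stratum $\stratSS{I}\cong(\C^*)^m$, via the logarithmic covering
\[
	\exp_I:\C^m\longrightarrow\stratSS{I},\qquad (u_j)_{j\in I}\longmapsto (e^{u_j})_{j\in I},
\]
extended by zeros outside $I$. Under this covering the action of $T\in\C^k$ becomes translation by the vector $A_I T:=(\langle\Lambda_j,T\rangle)_{j\in I}\in\C^m$. Thus $A_I:\C^k\to\C^m$ is the complex linear map whose rows are the weights $\Lambda_j$, $j\in I$. The orbit of $z=\exp_I(u)$ is the image of the affine subspace $u+A_I(\C^k)$. Two points lie on the same leaf exactly when their logarithms differ by an element of $A_I(\C^k)+2\pi i\Z^m$. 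The isotropy is $G_z=A_I^{-1}(2\pi i\Z^m)$, which depends only on $I$ and not on $z$.

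Case $m\le k$. Genericity gives $\rank_\C A_I=\min\{m,k\}=m$, so $A_I$ is surjective onto $\C^m$. Given any $z,w\in\stratSS{I}$, choose logarithms $u,v$ and solve $A_IT=v-u$. Then $\Phi(T,z)=w$, so the action is transitive on the stratum and $\stratSS{I}$ is a single leaf. As a consistency check, Proposition~\ref{prop:leaf-dimension} gives leaf dimension $q=m=\dim_\C\stratSS{I}$. The stratum is then $\C^k/G_z\cong(\C^*)^m$, which matches since $G_z=\ker A_I\oplus(\text{lattice of rank } m)$.

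Case $m>k$. Genericity gives $\rank A_I=k$. By Proposition~\ref{prop:leaf-dimension} every leaf in the stratum has complex dimension $k<m$, so $\stratSS{I}$ is a union of more than one leaf, in fact uncountably many. To see that all these leaves are mutually diffeomorphic, note that each is biholomorphic, via the orbit map, to $\C^k/G_z$. Since $G_z=A_I^{-1}(2\pi i\Z^m)$ is the same subgroup $G_I$ for every $z\in\stratSS{I}$, all leaves are biholomorphic to the single quotient $\C^k/G_I$. Alternatively, the diagonal multiplication $z\mapsto (w_j/z_j)\cdot z$ is a biholomorphism of the stratum that commutes with the action and carries $\leaf{z}$ onto $\leaf{w}$.

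The main obstacle is the claim that $\C^k/G_I$ is a manifold with the orbit map an injective immersion, i.e. that $G_I$ is discrete. Since $A_I$ is injective when $m\ge k$, $G_I$ is isomorphic to $A_I(\C^k)\cap 2\pi i\Z^m$, a discrete subgroup, so Proposition~\ref{prop:principal-topology} applies verbatim to give $\R^{2k-r}\times\T^r$ with $r=\rank G_I$. The leaves may fail to be closed in $\stratSS{I}$, but this does not affect the statement, which asks only for the foliation by diffeomorphic $k$-dimensional leaves.
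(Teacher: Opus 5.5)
Your proof is correct. For part (1) it follows the paper's argument: $G_z=A_I^{-1}(2\pi i\Z^m)$ depends only on $I$, so every leaf of $\stratSS{I}$ is a copy of $\C^k/G_I$, and genericity gives complex dimension $k$. You add two things the paper does not record. First, you check that $G_I$ is discrete, using injectivity of $A_I$. Second, you use diagonal multiplication by the constant vector $(w_j/z_j)_{j\in I}$. It commutes with the action and gives an ambient biholomorphism of $\stratSS{I}$ carrying $\leaf{z}$ onto $\leaf{w}$. That is slightly stronger than the paper's conclusion: the leaves are equivalent as immersed submanifolds of the stratum, not only abstractly diffeomorphic.

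For part (2) the routes differ. The paper argues by dimension. The leaf dimension $m$ equals $\dim_\C\stratSS{I}$, so every leaf is open in the stratum, and connectedness of $\stratSS{I}\cong(\C^*)^m$ then allows only one leaf. You instead prove transitivity directly on the logarithmic cover, solving $A_IT=v-u$ by surjectivity of $A_I$. The paper's version needs only the rank count plus connectedness. Yours is constructive and also identifies the leaf as $\C^k/G_z\cong(\C^*)^m$; it is essentially the argument the paper itself uses later in part (1) of Theorem~\ref{thm:threshold}.

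One small correction of emphasis. Discreteness of $G_I$ is not what makes $\C^k/G_I$ a manifold with an injectively immersed orbit map. The stabilizer is automatically closed, and that already gives this. What discreteness buys is that the quotient has complex dimension $k$, which Proposition~\ref{prop:leaf-dimension} already guarantees.
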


\begin{proof}
	Every point of $\stratSS{I}$ has support exactly $I$.
	For $z\in\stratSS{I}$, the isotropy subgroup is
	\[
		G_z= \left\{ T\in\C^k: \langle\Lambda_j,T\rangle\in 2\pi i\Z \text{ for all } j\in I \right\}.
	\]
	Indeed, the coordinates with $j\notin I$ vanish and impose no condition,
	while $z_j\neq0$ for $j\in I$.
	Hence, $G_z$ depends only on the support $I$, and is therefore constant throughout the stratum $\stratSS{I}$.

	By Proposition~\ref{prop:leaf-dimension}, the complex dimension of the leaf through any point of $\stratSS{I}$ is
	\[
		\rank_{\C}\{\Lambda_j:j\in I\}.
	\]
	By the maximal-rank hypothesis, this dimension is equal to
	$\min\{m,k\}$.

	If $m>k$, then $\min\{m,k\}=k$.
	Hence $\stratSS{I}$ is foliated by leaves of maximal complex dimension $k$.
	Since the isotropy subgroup is constant on the whole stratum, these leaves are mutually diffeomorphic.

	If $m\le k$, then the leaf dimension is $m=\dim_\C \stratSS{I}$.
	Thus, every leaf in $\stratSS{I}$ is open in the stratum.
	Since $\stratSS{I}\simeq(\C^*)^m$ is connected, the stratum consists of a single leaf.
\end{proof}

Under these hypotheses, the orbit structure on a coordinate stratum $\stratSS{I}$ depends only on the cardinality $|I|$ of the support.

From the complex-analytic viewpoint, the principal leaves are quotients $\C^k/G_{\mathrm{pr}}$.
Thus, they are complex Abelian Lie groups, whose analytic type may range from Stein examples to Cousin-type quotients.

The preceding discussion shows that the orbit foliation is governed by three complementary aspects of the weight configuration: the ranks of its subconfigurations determine the dimensions of the leaves, the convex position of the weights distinguishes the Poincaré and Siegel regimes, and the arithmetic of the isotropy groups determines the topology of the leaves.

\begin{example}\label{ex:C3C4}
	Consider the diagonal action of $\C^3$ on $\C^4$ determined by the weights
	\[
		\Lambda_1=(1,0,0),\qquad \Lambda_2=(0,1,0),\qquad \Lambda_3=(0,0,1),\qquad \Lambda_4=(1,1,1).
	\]
	This configuration lies in the Poincaré domain, since all weights belong
	to the open half-space $x_1+x_2+x_3>0$.
	Moreover, every subconfiguration has maximal rank.

	By Proposition~\ref{prop:leaf-dimension}, the dimension of the leaf through a point $z\in\C^4$ is the rank of the active weights $\{\Lambda_j:j\in I(z)\}$.
	Hence, leaves with support of cardinality $1$, $2$, and $3$ have dimensions $1$, $2$, and $3$, respectively, whereas points with full support also have leaves of dimension $3$.

	Thus, the foliation has leaves of all possible dimensions \(1,2,3\).
	On the principal stratum \((\C^*)^4\), the first three weights form the standard basis of \(\C^3\).
	Hence, if \(T=(t_1,t_2,t_3)\) belongs to the principal isotropy subgroup, then $\{ t_1,t_2,t_3 \} \subset 2\pi i\Z$.
	The fourth condition, $e^{t_1+t_2+t_3}=1$, is then automatically satisfied.
	Therefore $G_{\mathrm{pr}}=(2\pi i\Z)^3$.
	It follows that the corresponding principal leaves are diffeomorphic to $\R^3\times\T^3$.
	This illustrates how the coordinate stratification produces lower-dimensional leaves, while the principal leaves retain the topology of the quotient $\C^3/(2\pi i\Z)^3$.
\end{example}

The next section focuses on the Poincaré case, where the foliation admits a conical model through the trace foliation induced on spheres.

\section{Poincaré Dynamics and Trace Foliations on Spheres}
\label{sec:PoincareDynamics}

In this section we study the punctured orbit foliation associated with a Poincaré configuration.
For $r>0$, put
\[
	S_r=\{z\in\C^n:\|z\|=r\}.
\]
The intersections with $S_r$ of the nonzero ambient leaves form the
\emph{trace foliation}, denoted by $\widehat{\mathcal F}_r(\Lam)$.

\begin{proposition}[Conical model]
	\label{prop:cone}
	Assume that $\Lam$ lies in the Poincaré domain.
	Fix
	\[
		C_{\Lam}=\{\xi\in\C^k_{\R}:\operatorname{Re}\langle\Lambda_j,\xi\rangle>0, \ j=1,\dots,n\},
	\]
	and choose $\xi\in C_{\Lam}$.
	Then
	\[
		\Theta_r:S_r\times\R\longrightarrow\C^n\setminus\{0\}, \qquad (z,t)\longmapsto\Phi(t\xi,z),
	\]
	is a homeomorphism.
	It maps the product of every trace leaf with $\R$ onto the corresponding nonzero ambient leaf.
	Thus the punctured orbit foliation is homeomorphic to the open cone over the trace foliation.
\end{proposition}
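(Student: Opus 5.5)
The plan is to study the radial function along the real one-parameter flow $t\mapsto\Phi(t\xi,z)$ and show that it is strictly monotone, proper, and transverse to every sphere. For $z\neq0$ set
\[
	\rho_z(t)=\|\Phi(t\xi,z)\|^2=\sum_{j\in I(z)}|z_j|^2e^{2t\operatorname{Re}\langle\Lambda_j,\xi\rangle}.
\]
Since $\xi\in C_{\Lam}$, each exponent $a_j=2\operatorname{Re}\langle\Lambda_j,\xi\rangle$ is strictly positive. Hence $\rho_z'(t)=\sum a_j|z_j|^2e^{a_jt}>0$, so $\rho_z$ is a strictly increasing smooth function. Moreover $\rho_z(t)\to0$ as $t\to-\infty$ and $\rho_z(t)\to\infty$ as $t\to+\infty$, and this holds uniformly once $\|z\|$ ranges over a compact subset of $(0,\infty)$. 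The key bounds are
\[
	\|z\|^2e^{a_{\min}t}\le\rho_z(t)\le\|z\|^2e^{a_{\max}t}\quad(t\ge0),
\]
with the inequalities reversed for $t\le0$, where $a_{\min},a_{\max}$ denote the minimum and maximum of the $a_j$.

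From this I would deduce the required properties of $\Theta_r$ directly. For injectivity, suppose $\Phi(t\xi,z)=\Phi(s\xi,w)$ with $z,w\in S_r$. Then $z=\Phi((s-t)\xi,w)$, and strict monotonicity together with $\rho_w(0)=r^2=\rho_w(s-t)$ forces $s=t$, hence $z=w$. For surjectivity, given $y\neq0$, the intermediate value theorem applied to $\rho_y$ yields a unique $\tau(y)$ with $\|\Phi(\tau(y)\xi,y)\|=r$. Then $y=\Theta_r(\Phi(\tau(y)\xi,y),-\tau(y))$.

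Continuity of $\Theta_r$ is clear. The main point is continuity of the inverse $y\mapsto(\Phi(\tau(y)\xi,y),-\tau(y))$, which reduces to continuity of $\tau$. I would obtain this from the implicit function theorem applied to $F(y,t)=\rho_y(t)-r^2$, using $\partial_tF=\rho_y'(t)>0$; this even shows that $\tau$ is smooth. The same derivative computation shows that the radial component of the vector field $\frac{d}{dt}\Phi(t\xi,\cdot)$ is positive, which gives the claimed transversality of nonzero orbits to $S_r$.

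Finally, I would check the leaf correspondence. Because $t\xi\in\C^k$ and $\Phi$ is a group action, $\Theta_r(\{z\}\times\R)$ lies in the ambient leaf $\leaf{z}$. Conversely, every point of $\leaf{z}\setminus\{0\}$ is pushed by the flow to a point of $\leaf{z}\cap S_r$, which lies in a trace leaf. So $\Theta_r$ maps $(\leaf{z}\cap S_r)\times\R$ onto $\leaf{z}$. Note that $\leaf{z}\cap S_r$ is the trace of one ambient leaf; this matches the definition of trace leaves as intersections, even if such an intersection is a union of several components of the trace foliation.

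The cone statement then follows by reparametrizing $t\mapsto e^t\in(0,\infty)$ and identifying $S_r\times(0,\infty)$ with the open cone minus its vertex. I expect the main obstacle to be properness and continuity of the inverse, but the uniform exponential bounds above together with the implicit function theorem handle both.
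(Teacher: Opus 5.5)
Your proposal is correct and follows essentially the paper's argument. Both proofs rest on the strict monotonicity of $t\mapsto\|\Phi(t\xi,z)\|^2$ and its limits $0$ and $\infty$, which give a unique hitting time that is continuous by the implicit function theorem; you add more explicit detail on injectivity, surjectivity, the leaf correspondence and the cone reparametrization. The one point the paper records that you skip is that $C_{\Lam}\neq\varnothing$. This follows from the separating-hyperplane theorem, because every real linear functional on $\C^k$ has the form $v\mapsto\operatorname{Re}\langle v,\xi\rangle$ for some $\xi$. It is needed for the final cone conclusion to be non-vacuous.
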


\begin{proof}
	The separating-hyperplane theorem gives $C_{\Lam}\ne\varnothing$.
	Figure~\ref{F1} is a real two-dimensional illustration of this condition.
	For $z\ne0$,
	\[
		\|\Phi(t\xi,z)\|^2= \sum_{j=1}^n|z_j|^2e^{2t\operatorname{Re}\langle\Lambda_j,\xi\rangle}
	\]
	is strictly increasing in $t$, tends to $0$ as $t\to-\infty$, and tends to $\infty$ as $t\to\infty$.
	Hence each $\R\xi$-orbit meets $S_r$ at exactly one point.
	The unique hitting time depends continuously on the initial point by strict monotonicity (equivalently, by the implicit function theorem), which gives the stated homeomorphism and its inverse.
\end{proof}

\begin{figure}[ht]
	\centering
	\includegraphics[width=0.55\textwidth]{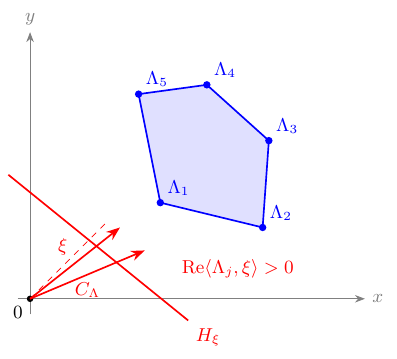}
	\caption{A real two-dimensional illustration of the Poincaré condition.
		The convex hull of the weights is strictly separated from the origin.
		Equivalently, there is a real direction $\xi$ such that $\operatorname{Re}\langle\Lambda_j,\xi\rangle>0$ for every $j$.
	}
	\label{F1}
\end{figure}

Fix $\xi\in C_{\Lam}$ and a real hyperplane $H_\xi\subset\C^k_{\R}$ complementary to $\R\xi$, so that
\[
	\C^k_{\R}=H_\xi\oplus\R\xi.
\]
For $U\in H_\xi$ and $z\in S_r$, let $\tau(U,z)$ be the unique real
number for which
\[
	\Phi(U+\tau(U,z)\xi,z)\in S_r.
\]
Such $\tau$ exists and depends smoothly on $(U,z)$: the function $s\mapsto\|\Phi(U+s\xi,z)\|^2$ has strictly positive derivative 
$$
	2\sum_j|z_j|^2e^{2s\,\mathrm{Re}\langle\Lambda_j,\xi\rangle} \mathrm{Re}\langle\Lambda_j,\xi\rangle >0, 
$$
at every $z\ne0$, so the implicit function theorem applies to the equation $\|\cdot\|^2=r^2$.
Set
\[
	\Psi_U(z)=\Phi(U+\tau(U,z)\xi,z).
\]

\begin{lemma}[Trace action and homogeneous leaves]
	\label{thm:trace-main}
	For every $r>0$ the maps $\Psi_U$ define a smooth action of $H_\xi$ on $S_r$.
	Its orbits are precisely the trace leaves.
	If $z\in S_r$ has support $I(z)$ and
	\[
		q(z)=\rank_{\C}\{\Lambda_j:j\in I(z)\},
	\]
	then the trace leaf through $z$ has real dimension $2q(z)-1$ and is an immersed homogeneous space
	\[
		\widehat O_z\cong H_\xi/\Gamma_z, \qquad \Gamma_z=\{U\in H_\xi:\Psi_U(z)=z\}.
	\]
\end{lemma}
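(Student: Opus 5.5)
Throughout, I use that $\Phi$ is an action of the abelian group $\C^k_{\R}$ and that $\tau$ is uniquely determined by $(U,z)$, as established just before the statement.

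\textbf{Step 1: the action axioms.} First, $\Psi_0=\mathrm{id}$, because $\tau(0,z)=0$ by uniqueness of the hitting time. For $U,V\in H_\xi$ and $z\in S_r$ I write
\[
\Psi_V(\Psi_U(z))=\Phi\bigl(V+\tau(V,\Psi_U z)\xi,\ \Phi(U+\tau(U,z)\xi,z)\bigr)=\Phi\bigl(U+V+s\,\xi,\ z\bigr),
\qquad s=\tau(U,z)+\tau(V,\Psi_U z).
\]
This point lies on $S_r$. Proposition~\ref{prop:cone} gives uniqueness: the line $s\mapsto \Phi(U+V+s\xi,z)$ is the $\R\xi$-orbit of $\Phi(U+V,z)\ne0$, so it meets $S_r$ exactly once. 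Hence $s=\tau(U+V,z)$, and therefore $\Psi_V\circ\Psi_U=\Psi_{U+V}$. Smoothness of $(U,z)\mapsto\Psi_U(z)$ follows from the smoothness of $\tau$ given by the implicit function theorem.

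\textbf{Step 2: orbits are trace leaves.} Fix $z\in S_r$. Each $\Psi_U(z)$ lies in $\Phi(\C^k,z)\cap S_r=\leaf{z}\cap S_r$, so the $H_\xi$-orbit is contained in the trace leaf. Conversely, let $w=\Phi(T,z)\in S_r$ and decompose $T=U+s\xi$ with $U\in H_\xi$, which is possible since $\C^k_{\R}=H_\xi\oplus\R\xi$. By uniqueness of the hitting time, $s=\tau(U,z)$, and so $w=\Psi_U(z)$. Thus the orbit equals the trace leaf. I would also note that the $H_\xi$-action preserves supports, since $\Phi$ does.

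\textbf{Step 3: homogeneous structure and dimension.} The orbit map $H_\xi\to S_r$, $U\mapsto\Psi_U(z)$, is equivariant for a smooth action of a Lie group. Its stabilizer $\Gamma_z$ is therefore a closed subgroup, and the standard result gives an injective immersion $H_\xi/\Gamma_z\to S_r$ onto $\widehat O_z$.

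For the dimension, I compare with Proposition~\ref{prop:cone}. Restricted to the leaf, the map $\Theta_r$ identifies $\widehat O_z\times\R$ with $\leaf{z}$. I would argue at the level of tangent spaces: the tangent space to $\leaf{z}$ at $z$ is $d\Phi_z(\C^k)$, which has real dimension $2q(z)$ by Proposition~\ref{prop:leaf-dimension}. The radial vector $d\Phi_z(\xi)$ is transverse to $S_r$, since its pairing with the position vector is positive. The tangent space to the trace leaf is $d\Phi_z(\C^k)\cap T_zS_r$, which has dimension $2q(z)-1$. By Step 2 it also equals the image of the differential of the orbit map $U\mapsto \Psi_U(z)$. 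Indeed, that differential is $d\Phi_z(U)+(\partial_U\tau)\,d\Phi_z(\xi)$, and modulo the radial direction it surjects onto $d\Phi_z(\C^k)/\R\,d\Phi_z(\xi)$. This yields real dimension $2q(z)-1$.

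\textbf{Main obstacle.} The delicate point is the dimension count and the immersion claim, rather than the action axioms. On lower strata $\Gamma_z$ is not discrete: it contains the subspace of $H_\xi$ lying in $\ker\{\langle\Lambda_j,\cdot\rangle : j\in I(z)\}$. One therefore has to check carefully that $\dim H_\xi-\dim\Gamma_z=2q(z)-1$. Note that $\xi\notin$ this kernel, since $\operatorname{Re}\langle\Lambda_j,\xi\rangle>0$, so the kernel has real dimension $2k-2q(z)$ and meets $\R\xi$ trivially. I would handle this by projecting that kernel into $H_\xi$ along $\xi$, and then check that its image is exactly the identity component of $\Gamma_z$.
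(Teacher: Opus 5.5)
Your proof is correct and follows the same route as the paper's: uniqueness of the radial hitting time gives the action law and identifies the $H_\xi$-orbits with the trace leaves, transversality of $\R\xi$ to $S_r$ gives the dimension $2q(z)-1$, and orbit--stabilizer gives $H_\xi/\Gamma_z$; you simply write out the details the paper leaves implicit. The concern in your final paragraph is already settled by your rank computation in Step 3, since the rank of the orbit map at $0$ equals $\dim H_\xi-\dim\Gamma_z$.
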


\begin{proof}
	The uniqueness of the radial correction and the action identity for $\Phi$ imply
	\[
		\Psi_U\circ\Psi_V=\Psi_{U+V}.
	\]
	Every ambient orbit is the union of the $\R\xi$-orbits through its
	intersection with $S_r$, so the $H_\xi$-orbits are exactly the trace
	leaves.
	The ambient orbit through $z$ has complex dimension $q(z)$, and the $\R\xi$ direction is everywhere transverse to the sphere; therefore its intersection with $S_r$ has real dimension $2q(z)-1$.
	The homogeneous-space description is the standard orbit--stabilizer identification for a Lie-group action.
\end{proof}

\begin{remark}
	The action $\Psi$ is a \emph{radially reparametrized} $H_\xi$-action; it is not in general the literal restriction of the original linear action to $S_r$.
	Different choices of $\xi$ and of a complementary hyperplane produce orbit-equivalent descriptions of the same trace foliation.
\end{remark}

\begin{remark}
	Since $H_\xi$ is a real vector group and $\Gamma_z$ is closed, every trace leaf is a connected abelian Lie group.
	Hence it is diffeomorphic to $\R^{2q(z)-1-s}\times\T^s$ for some $s\ge0$.
\end{remark}

We next record the universal restriction on orbit closures supplied by the diagonal action.

\begin{lemma}[Support monotonicity for trace-orbit closures]
	\label{thm:omega-strata}
	Let $z\in S_r$ and define
	\[
		\omega(z)=\left\{p\in S_r:\exists U_\nu\in H_\xi,\ \|U_\nu\|\to\infty,\ \Psi_{U_\nu}(z)\to p\right\}.
	\]
	Then every $p\in\omega(z)$ satisfies
	\[
		I(p)\subseteq I(z).
	\]
	Consequently,
	\[
		\omega(z)\subseteq
		\bigcup_{J\subseteq I(z)}\bigl(S_r\cap\stratSS{J}\bigr).
	\]
\end{lemma}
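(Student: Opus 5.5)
The plan is to exploit diagonality directly: each coordinate of $\Psi_U(z)$ is obtained from $z_j$ by multiplication by a scalar. Write
\[
	\Psi_U(z)_j = z_j\, e^{\langle\Lambda_j,\,U+\tau(U,z)\xi\rangle}, \qquad j=1,\dots,n.
\]
If $j\notin I(z)$, then $z_j=0$, and therefore $\Psi_U(z)_j=0$ for every $U\in H_\xi$. In other words, the whole trace orbit $\{\Psi_U(z):U\in H_\xi\}$ lies in the coordinate subspace $\strat{I(z)}=\{w:w_j=0 \text{ for } j\notin I(z)\}$.

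The key step is then purely topological. The subspace $\strat{I(z)}$ is a closed linear subspace of $\C^n$, so $\strat{I(z)}\cap S_r$ is closed in $S_r$. Take $p\in\omega(z)$, so that $p=\lim_\nu \Psi_{U_\nu}(z)$. For each $j\notin I(z)$ we have $p_j=\lim_\nu \Psi_{U_\nu}(z)_j=0$, since coordinate projections are continuous. Hence $p\in\strat{I(z)}$, that is, $I(p)\subseteq I(z)$. Note that the hypothesis $\|U_\nu\|\to\infty$ plays no role here. The argument actually shows the stronger statement that the full closure of the trace leaf lies in $\strat{I(z)}\cap S_r$.

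For the consequence, I would use the decomposition \eqref{eq:stratification}: every point of $\strat{I(z)}$ has support exactly some $J\subseteq I(z)$, so
\[
	\strat{I(z)}=\bigsqcup_{J\subseteq I(z)}\stratSS{J}.
\]
Intersecting with $S_r$ gives the stated inclusion $\omega(z)\subseteq\bigcup_{J\subseteq I(z)}\bigl(S_r\cap\stratSS{J}\bigr)$.

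I do not expect a real obstacle. The only point needing care is that the radial correction $\tau(U,z)$ enters each coordinate only through a nonvanishing exponential factor. It therefore cannot create a nonzero coordinate, which is exactly what the explicit formula for $\Psi_U$ shows. The closure may still lose coordinates, since the coordinates indexed by $I(z)$ can tend to $0$; this is why the conclusion is an inclusion and not an equality.
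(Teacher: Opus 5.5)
Your proposal is correct and follows essentially the same route as the paper. Both arguments observe that a coordinate vanishing at $z$ stays zero along the trace orbit, because the radial correction only contributes a nonvanishing exponential factor, and hence also vanishes in the closure. You spell out in addition the closedness of $\strat{I(z)}\cap S_r$ and the decomposition $\strat{I(z)}=\bigsqcup_{J\subseteq I(z)}\stratSS{J}$, which the paper leaves implicit.
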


\begin{proof}
	If $z_j=0$, then the $j$th coordinate of $\Phi(T,z)$ is zero for every $T\in\C^k$, and the radial correction does not alter this fact.
	Hence every coordinate vanishing at $z$ vanishes on the trace orbit and on its closure.
\end{proof}

\begin{remark}[Same-support recurrence]
	\label{rem:same-support}
	The inclusion in Lemma~\ref{thm:omega-strata} need not be strict for points outside the original trace leaf.
	Nonclosed trace leaves may accumulate on other leaves with the same support.
	For example, when $k=1$, $n=2$, and $\Lambda_1=1$, $\Lambda_2=\sqrt2$, the action of $H_\xi=i\R$ on a sphere is
	\[
		s\longmapsto(e^{is}z_1,e^{i\sqrt2s}z_2).
	\]
	For $z_1z_2\ne0$, its orbit is dense in the torus determined by $|z_1|$ and $|z_2|$.
	Thus orbit closure has a same-support component.
	Coordinate degeneration is therefore only one source of accumulation.
\end{remark}

In particular, no uniform rank-one statement describes all $\omega$-limit sets without supplementary assumptions on the weights.
Depending on their arithmetic and on their real and imaginary parts, trace dynamics may be periodic, quasiperiodic, or may also exhibit accumulation on lower-support strata.

\medskip

We now summarize the previous discussion.
Recall that if  $z\in S_r$, the number $q(z)$ is the complex dimension of the vector subspace of $\C^k$ generated by the active weights in the support $I(z)$.
%
%For every subset $ I\subset\{1,\ldots,n\}$, let 
%\[
%	q(I):=\operatorname{rank}_{\C}\{\Lambda_j:j\in I\},
%\] 
%denote the complex dimension of the vector subspace of $\C^k$ generated by the weights $\{\Lambda_j:j\in I\}$.
%If $z\in S_r$, we write $ q(z):=q(I(z))$.
Under the genericity assumption one has
$$
	q(z)=\min\{|I(z)|,k\}.
$$
The preceding results combine to give the following global description of the trace foliation.
\begin{theorem}[Global structure of the trace foliation]
	Assume that $\Lambda$ lies in the Poincaré domain with generic weights, and fix $r>0$.
	Let $\widehat{\mathcal F}_r(\Lambda)$ be the trace foliation on $S_r$.
	Then:

	\begin{enumerate}
		\item For a suitable real $(2k-1)$-dimensional vector group
		      $
			      H_\xi\subset \C^k_{\R},
		      $
		      the trace leaves are precisely the $H_\xi$-orbits.
		      Hence each trace leaf $\widehat{\leaf{z}}$ is an immersed homogeneous manifold
		      $
			      \widehat{\leaf{z}}\cong H_\xi/\Gamma_z,
		      $
		      where $\Gamma_z$ is its isotropy subgroup.

		\item For each $z\in S_r$, the trace leaf $\widehat{\leaf{z}}$ has real dimension $2q(z)-1$.
%		      $$
%			      \dim_{\R}\widehat {\leaf{z}} =2q(z)-1 = 2\min\{| I(z) |,k\} - 1.
%		      $$

%		\item The closure of every trace leaf is saturated by trace leaves and respects the support stratification.
%		      More precisely, if
%		      \[
%			     \widehat{\leaf{w}} \subseteq  \operatorname{closure}(\widehat{\leaf{z}}),
%		      \]
%		      then
%		      \[
%			      I(w)\subseteq I(z).
%		      \]

\item The closure of each trace leaf is saturated by trace leaves and is compatible with the support stratification: if $\widehat{\leaf{w}}\subseteq \operatorname{cl}\bigl(\widehat{\leaf{z}}\bigr)$, then $I(w)\subseteq I(z)$.

	\end{enumerate}
\end{theorem}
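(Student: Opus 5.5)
The plan is to assemble the theorem from Proposition~\ref{prop:cone}, Lemma~\ref{thm:trace-main} and Lemma~\ref{thm:omega-strata}, adding only one genuinely new ingredient: saturation of leaf closures.

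\emph{Item (1).} Since $\Lam$ lies in the Poincaré domain, the cone $C_{\Lam}$ is nonempty, so we may choose $\xi\in C_{\Lam}$. Take $H_\xi$ to be any real hyperplane complementary to $\R\xi$; it has real dimension $2k-1$. Lemma~\ref{thm:trace-main} then says that the radially reparametrized maps $\Psi_U$ define a smooth $H_\xi$-action on $S_r$. The orbits of this action are exactly the trace leaves, and each orbit is identified with $H_\xi/\Gamma_z$ by the orbit--stabilizer map.

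\emph{Item (2).} Lemma~\ref{thm:trace-main} gives the real dimension $2q(z)-1$ directly. Under genericity this equals $2\min\{|I(z)|,k\}-1$, as recorded just before the theorem.

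\emph{Item (3).} This is the one step requiring an argument beyond citation. First I would show that $\operatorname{cl}(\widehat{\leaf{z}})$ is $H_\xi$-invariant. Each $\Psi_U$ is a homeomorphism of $S_r$, since its inverse is $\Psi_{-U}$, and it preserves $\widehat{\leaf{z}}$. A homeomorphism preserving a set also preserves its closure. An invariant closed set is a union of $H_\xi$-orbits, which by item (1) are exactly the trace leaves. Hence the closure is saturated by trace leaves.

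Next, suppose $\widehat{\leaf{w}}\subseteq\operatorname{cl}(\widehat{\leaf{z}})$. Then $w$ is a limit of points $\Psi_{U_\nu}(z)$. If $j\notin I(z)$, then $z_j=0$. The $j$-th coordinate of every $\Psi_U(z)$ is $z_je^{\langle\Lambda_j,U+\tau\xi\rangle}=0$. The set $\{p: p_j=0\}$ is closed, so $w_j=0$ as well. This shows $I(w)\subseteq I(z)$.

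This is exactly the argument of Lemma~\ref{thm:omega-strata}, but applied to the whole closure rather than only to $\omega(z)$. Indeed $\operatorname{cl}(\widehat{\leaf{z}})=\widehat{\leaf{z}}\cup\omega(z)$, where the leaf itself trivially has support $I(z)$. I would therefore phrase item (3) by citing that lemma together with this decomposition. The only mild subtlety is to check the decomposition: a limit of $\Psi_{U_\nu}(z)$ with $U_\nu$ bounded has a convergent subsequence $U_\nu\to U$. By continuity of $\Psi$ in $U$, the limit is then $\Psi_U(z)\in\widehat{\leaf{z}}$.
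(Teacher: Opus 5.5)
Your proposal is correct and follows the paper's proof. Items (1) and (2) come from Lemma~\ref{thm:trace-main} together with genericity, saturation comes from the $H_\xi$-invariance of the closure of an $H_\xi$-orbit, and the support inclusion is the coordinate-vanishing argument of Lemma~\ref{thm:omega-strata}. Your explicit check that $\operatorname{cl}(\widehat{\leaf{z}})=\widehat{\leaf{z}}\cup\omega(z)$ fills in a step the paper leaves implicit when it cites that lemma, although your direct closedness argument already covers the whole closure and makes this check unnecessary.
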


\begin{proof}
	Statements (1) and (2) follow from Lemma~\ref{thm:trace-main} and the genericity assumption.
	Since the closure of an $H_\xi$-orbit is $H_\xi$-invariant, it is saturated by trace leaves.
	The support inclusion in (3) then follows from Lemma~\ref{thm:omega-strata}.
\end{proof}

\begin{remark}
	Orbit closures may exhibit two distinct types of accumulation: recurrence within a fixed support stratum and degeneration to lower-support strata.
	The former is illustrated in Remark \ref{rem:same-support}.
\end{remark}

\section{Topology and Stability}\label{sec:stability}

In the previous section we showed that the orbit decomposition is stratified by the coordinate supports, and the complex dimension of the leaves on each stratum is determined by the rank of the corresponding subset of weights.

We now investigate the topology of these leaves.
Since each leaf is an orbit of the action of $\C^k$ which is an Abelian Lie group, every leaf is a connected complex Abelian Lie group.
More precisely, if $\leaf{}$ is a leaf of complex dimension $q$, then $\leaf{}\cong\C^q/\Gamma$, where $\Gamma\subset\C^q$ is a discrete subgroup.

Every discrete subgroup of $\C^q$ is a free Abelian group of finite rank.
Hence, there exists a unique integer $r=\operatorname{rank}(\Gamma)$ such that $ \Gamma\simeq\Z^r$.
Consequently,
$$
	\leaf{}\simeq\R^{2q-r}\times\T^r.
$$
Thus, while the geometry of the weight configuration determines the complex dimensions of the leaves, the arithmetic of the corresponding isotropy subgroups determines their diffeomorphism types.

\medskip
As shown in Section~\ref{sec:PoincareDynamics}, the orbit stratification contains strata whose leaves have every complex dimension between $1$ and $k$.
The purpose of this section is to understand how the topology of these leaves varies across the orbit stratification.
We shall see that the stratification naturally decomposes into three distinct regimes.
In the low-dimensional regime the diffeomorphism type of the leaves is rigid; in the high-dimensional regime it is generically rigid (for an open dense set of weight configurations); whereas in the intermediate regime it depends on arithmetic properties of the isotropy subgroup.
This threshold phenomenon will ultimately lead to the failure of stratified stability.

\medskip

For a coordinate stratum $\stratS{I}$, consider the linear map 
$$ 
	A_I:\C^k\longrightarrow\C^{|I|}, \qquad A_I(T)= (\langle\Lambda_j,T\rangle)_{j\in I}.
$$
Set
$$
	K_I=\ker(A_I) =	\bigcap_{j\in I}\ker\Lambda_j.
$$

With this notation, we have the following lemma.

\begin{lemma}\label{lem:isotropy-splitting}
	Let $z\in\C^n$ and let $I=I(z)$ be its support.
	Let $ K_I $ be as above and choose a complex vector subspace $F_I\subset\C^k$ such that $\C^k=K_I\oplus F_I.$
	%(We write $F_I$ rather than $\strat{I}$ to avoid collision with the coordinate subspace $\strat{I}\subset\C^n$ of Section~\ref{sec:preliminaries}.)
	Then the isotropy subgroup of the leaf through $z$ decomposes as $G_z = K_I\oplus G_z^{\mathrm{eff}}$, where $G_z^{\mathrm{eff}} = G_z\cap F_I$.
	Moreover, $G_z^{\mathrm{eff}}$ is a discrete subgroup of $F_I$, and the quotient group $\C^k/G_z$ is naturally isomorphic to $F_I/G_z^{\mathrm{eff}}$.
\end{lemma}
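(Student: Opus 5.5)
The plan is to work directly from the description of the isotropy group as a preimage under the linear map $A_I$:
\[
	G_z=\{T\in\C^k:\langle\Lambda_j,T\rangle\in 2\pi i\Z \text{ for all } j\in I\}=A_I^{-1}\bigl((2\pi i\Z)^{|I|}\bigr),
\]
which was established in the proof of Theorem~\ref{thm:coordinate-strata}. The first step is to observe that $K_I=\ker A_I\subset G_z$, since $0\in(2\pi i\Z)^{|I|}$. Moreover, $G_z$ is invariant under translation by $K_I$: if $T\in G_z$ and $K\in K_I$, then $A_I(T+K)=A_I(T)$.

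For the splitting, write $T\in G_z$ uniquely as $T=K+F$ with $K\in K_I$ and $F\in F_I$. Then $A_I(F)=A_I(T)\in(2\pi i\Z)^{|I|}$, so $F\in G_z\cap F_I=G_z^{\mathrm{eff}}$. Conversely, $K_I+G_z^{\mathrm{eff}}\subset G_z$, because $G_z$ is a subgroup containing both summands. Since $K_I\cap F_I=\{0\}$, the sum is direct as abelian groups, which gives $G_z=K_I\oplus G_z^{\mathrm{eff}}$.

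For discreteness, the restriction $A_I|_{F_I}:F_I\to\C^{|I|}$ is injective, because its kernel is $K_I\cap F_I=0$. It is therefore a linear isomorphism onto its image and hence a homeomorphism onto a closed subspace. Under this map, $G_z^{\mathrm{eff}}$ is identified with $A_I(F_I)\cap(2\pi i\Z)^{|I|}$, the intersection of a subspace with a discrete subgroup of $\C^{|I|}$, which is discrete. Pulling back by the homeomorphism shows that $G_z^{\mathrm{eff}}$ is discrete in $F_I$.

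For the quotient, consider the composition $F_I\hookrightarrow\C^k\to\C^k/G_z$. It is surjective: any $T=K+F$ maps to the class of $F$, since $K\in G_z$. Its kernel is $F_I\cap G_z=G_z^{\mathrm{eff}}$. This gives a group isomorphism $F_I/G_z^{\mathrm{eff}}\cong\C^k/G_z$. It is also a homeomorphism, indeed a biholomorphism, because it is induced by the linear projection $\C^k\to F_I$ along $K_I$, which descends to the inverse map.

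No step is a serious obstacle. The only point requiring care is discreteness: one must use injectivity of $A_I$ on $F_I$, and not argue on $\C^k$ itself, where $G_z$ fails to be discrete whenever $K_I\ne0$.
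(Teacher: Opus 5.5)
Your proposal is correct and follows essentially the same route as the paper: you write $G_z=A_I^{-1}\bigl((2\pi i\Z)^{|I|}\bigr)$, split $T\in G_z$ along $\C^k=K_I\oplus F_I$, get discreteness of $G_z^{\mathrm{eff}}$ from the injectivity of $A_I$ on $F_I$, and obtain $\C^k/G_z\cong F_I/G_z^{\mathrm{eff}}$ from the projection onto $F_I$. Your version is slightly more explicit than the paper's, since it checks the reverse inclusion, the directness of the sum, and the topological nature of the isomorphism, but the argument is the same.
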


Thus the connected summand $K_I$ is determined purely by the linear geometry of the weights, while the discrete summand $G_z^{\mathrm{eff}}$ carries the arithmetic information relevant to the diffeomorphism type of the leaf.

\begin{proof}
	Since $z\in \stratS{I}$, the isotropy equations are 
	$$ 
		e^{\langle\Lambda_j,T\rangle}=1, \qquad j\in I.
	$$
	Equivalently,
	$$
		A_I(T)\in (2\pi i\Z)^{|I|}.
	$$
	In particular, every element of $K_I=\ker A_I$ fixes $z$, so $K_I\subset G_z$.

	Let $T\in G_z$.
	Write uniquely $T=T_0+T_1$ with $T_0\in K_I$ and $T_1\in F_I$.
	Since $T_0\in K_I\subset G_z$ and $T\in G_z$, it follows that $T_1=T-T_0\in G_z$.
	Hence $T_1\in G_z\cap F_I=G_z^{\mathrm{eff}}$.
	This proves 
	$$ 
		G_z=K_I\oplus G_z^{\mathrm{eff}}.
	$$

	The restriction $A_I|_{F_I}:F_I\to A_I(\C^k)$ is a real-linear isomorphism (since $F_I\cap K_I=\{0\}$ by the direct-sum decomposition).
	Therefore 
	$$ 
		G_z^{\mathrm{eff}} = (A_I|_{F_I})^{-1}\big((2\pi i\Z)^{|I|} \cap A_I(\C^k)\big).
	$$
	The set $(2\pi i\Z)^{|I|}\cap A_I(\C^k)$ is discrete in $A_I(\C^k)$, and the preimage of a discrete set under a linear isomorphism is discrete; hence $G_z^{\mathrm{eff}}$ is discrete.
	Finally, the projection $\C^k=K_I\oplus F_I\to F_I$ induces the claimed isomorphism 
	$$ 
		\C^k/G_z\cong F_I/G_z^{\mathrm{eff}}.
	$$
\end{proof}

\begin{theorem}[Threshold Theorem]\label{thm:threshold}
	Assume that the weight configuration is in the Poincaré domain and it satisfies the genericity condition.
	Let $\stratS{I}$ be a coordinate stratum and set $m=|I|$.
	Then the topology of the leaves in $\stratS{I}$ falls into the following three regimes.

	\begin{enumerate}

		\item If $m\le k$, then $\stratS{I}$ is a single leaf.
		      In particular, this leaf is biholomorphic to $(\C^*)^m$ and diffeomorphic to $\R^m\times\T^m$.

%		\item If $k<m<2k$, then the effective action on $\stratS{I}$ has complex
%		      dimension $k$, and
%		      $$
%				\dim_\R\bigl(A_I(\C^k)\cap i\R^m\bigr)\ge 2k-m>0.
%		      $$
%		      Moreover, for $z\in \stratS{I}$,
%		      $$
%			      G_z^{\mathrm{eff}}\neq\{0\}
%		      $$
%		      if and only if
%		      $$
%			      A_I(\C^k)\cap (2\pi i\Z)^m\neq\{0\}.
%		      $$
%		      Thus, in this intermediate range, the diffeomorphism type of the leaves, and even its topology,depends on the arithmetic position of $A_I(\C^k)$ with respect to the period lattice $(2\pi i\Z)^m$.

\item If $k<m<2k$, then the effective action on $\stratS{I}$ has
complex dimension $k$, and
\[
    \dim_{\R}\bigl(A_I(\C^k)\cap i\R^m\bigr)
    \geq 2k-m>0.
\]
Moreover, for every $z\in\stratS{I}$,
\[
    G_z^{\mathrm{eff}}\neq\{0\}
    \quad\Longleftrightarrow\quad
    A_I(\C^k)\cap(2\pi i\Z)^m\neq\{0\}.
\]
Thus, in this intermediate range, the diffeomorphism type of the leaves, and even their homeomorphism type, may depend on the arithmetic position of $A_I(\C^k)$ relative to the period lattice $(2\pi i\Z)^m$.

		\item If $m\ge 2k$, then, for an open dense set of weight configurations, one has $G_z^{\mathrm{eff}}=\{0\}$ for every $z\in \stratS{I}$.
		      Consequently, the leaves in $\stratS{I}$ are biholomorphic to $\C^k$ and diffeomorphic to $\R^{2k}$.
	\end{enumerate}
\end{theorem}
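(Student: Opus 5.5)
The plan is to treat the three regimes separately. Throughout I will use Lemma~\ref{lem:isotropy-splitting}, which identifies each leaf with $F_I/G_z^{\mathrm{eff}}$, and the isotropy description $G_z=A_I^{-1}\bigl((2\pi i\Z)^m\bigr)$. By genericity, $\rank_\C\{\Lambda_j:j\in I\}=\min\{m,k\}$, so $A_I$ has rank $\min\{m,k\}$.

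\emph{Case $m\le k$.} The single-leaf statement is Theorem~\ref{thm:coordinate-strata}(2). For the biholomorphism type, note that $A_I$ is surjective onto $\C^m$. The orbit map $T\mapsto(z_je^{\langle\Lambda_j,T\rangle})_{j\in I}$ factors as $A_I$ followed by the componentwise exponential map $\C^m\to(\C^*)^m$ multiplied by $z$. Hence the leaf equals $\stratS{I}\cong(\C^*)^m$, and $(\C^*)^m\cong(\R\times S^1)^m$ is diffeomorphic to $\R^m\times\T^m$. Equivalently, $G_z^{\mathrm{eff}}=(A_I|_{F_I})^{-1}(2\pi i\Z)^m$ has rank $m$ in $F_I\cong\C^m$.

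\emph{Case $k<m<2k$.} Here $A_I$ is injective, so $K_I=0$, $F_I=\C^k$ and $G_z^{\mathrm{eff}}=G_z$. The leaf then has complex dimension $k$, which is Theorem~\ref{thm:coordinate-strata}(1). For the dimension bound, $V=A_I(\C^k)$ is a real subspace of dimension $2k$ in $\C^m\cong\R^{2m}$, and $i\R^m$ has real dimension $m$. Grassmann's formula gives $\dim_\R(V\cap i\R^m)\ge 2k+m-2m=2k-m>0$. The equivalence follows because $A_I$ restricts to a bijection $G_z=A_I^{-1}((2\pi i\Z)^m)\to V\cap(2\pi i\Z)^m$. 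The final sentence of this case is qualitative, and I would justify it with an example. Taking $V$ spanned by rational data makes $V\cap i\R^m$ a rational subspace, so $V\cap(2\pi i\Z)^m$ is a lattice of positive rank in it. A small generic perturbation of the weights instead makes $V\cap(2\pi i\Z)^m=\{0\}$, since the pair condition is a countable union of proper closed conditions. The rank $r$ therefore jumps, and the leaf $\R^{2k-r}\times\T^r$ changes homotopy type. For a concrete instance, take $k=2$, $m=3$ with all weights real.

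\emph{Case $m\ge2k$.} Again $K_I=0$, so I need $V\cap(2\pi i\Z)^m=\{0\}$ generically. For each nonzero $\ell\in\Z^m$, the condition $2\pi i\ell\in A_I(\C^k)$ means that the $m\times(k+1)$ complex matrix $[A_I\mid i\ell]$ has rank at most $k$. Because $m\ge k+1$, this is a nontrivial closed algebraic condition on the weights. The set where it holds has empty interior, and it is indeed proper: choose weights so that $V$ avoids the given $\ell$, which is possible since $\dim_\C V=k<m$. Each such bad set is closed and nowhere dense, so by Baire the complement of the countable union over $\ell$ is dense. The \emph{main obstacle} is openness, because a countable union of closed sets need not be closed. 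This is where $m\ge2k$ enters. The real dimensions satisfy $2k+m\le 2m$, so for generic $V$ one has $V\cap i\R^m=\{0\}$, an open condition given by nonvanishing of a real $2m\times(2k+m)$ determinant minor. On that open set $V\cap(2\pi i\Z)^m=\{0\}$ automatically, and the set is dense since the condition is given by a nonzero real polynomial. For $m=2k$ this transversality is still open and dense, as the determinant is nonzero for some configuration. With $G_z^{\mathrm{eff}}=0$, the orbit map $\C^k\to\stratS{I}$ is an injective holomorphic immersion, so the leaf is biholomorphic to $\C^k$ and diffeomorphic to $\R^{2k}$.
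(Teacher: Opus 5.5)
Your proposal is correct and follows the paper's proof essentially step by step. You use the same three-case split, the surjectivity of $A_I$ composed with the exponential for $m\le k$, and the Grassmann dimension count together with the bijection $G_z\to A_I(\C^k)\cap(2\pi i\Z)^m$ (using $K_I=0$) for $k<m<2k$. For $m\ge 2k$ you use the open dense transversality condition $A_I(\C^k)\cap i\R^m=\{0\}$, which makes your preliminary Baire argument over lattice points unnecessary.

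Your one genuine addition is the rational-versus-perturbed example justifying the ``may depend'' clause; the paper's proof only asserts this here and establishes it later in Theorem~\ref{thm:arithmetic-instability}. For density in the case $m\ge 2k$, you should exhibit an explicit witness such as $A_0(t)=(t,it,0)$, as the paper does.
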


\begin{proof}
	By the genericity condition, the rank of the subset $\{\Lambda_j:j\in I\}$ is $q(I)=\min\{k,m\}.$
	Thus the effective action on $\stratS{I}$ has complex dimension $q(I)$.

	If $m\le k$, then $q(I)=m$.
	Hence the effective action has the same complex dimension as the stratum $\stratS{I}\simeq(\C^*)^m$.
	Since the exponential map 
	\[
		\C^m\longrightarrow(\C^*)^m
	\] 
	is surjective, the effective orbit through any point of $\stratS{I}$ is all of $\stratS{I}$.
	Thus $\stratS{I}$ is a single leaf, biholomorphic to $(\C^*)^m$.

	Assume now that $k<m<2k$.
	Then $q(I)=k$, and $A_I(\C^k)$ is a real $2k$-dimensional subspace of $\C^m\simeq\R^{2m}$.
	Since $i\R^m$ has real dimension $m$, the dimension formula gives 
	$$ 
		\dim_\R\bigl(A_I(\C^k)\cap i\R^m\bigr) \ge 2k+m-2m = 2k-m.
	$$
	This is positive precisely in the range $k<m<2k$.

	On the other hand, the effective isotropy is determined by the period condition 
	$$ 
		A_I(T)\in(2\pi i\Z)^m.
	$$
	Therefore
	$$
		G_z^{\mathrm{eff}}\neq\{0\}
	$$
	if and only if
	$$
		A_I(\C^k)\cap(2\pi i\Z)^m
	$$
	contains a nonzero lattice point.
	Hence the diffeomorphism type of the leaf depends on this arithmetic intersection.

	Finally, suppose that $m\ge 2k$.
	For an open dense set of weight configurations, the real $2k$-plane $A_I(\C^k)\subset\C^m$ is transverse to $i\R^m$.
	Hence 
	$$ 
		A_I(\C^k)\cap i\R^m=\{0\}.
	$$
	Since $(2\pi i\Z)^m\subset i\R^m$, it follows that
	$$
		A_I(\C^k)\cap(2\pi i\Z)^m=\{0\}.
	$$
	Therefore $G_z^{\mathrm{eff}}=\{0\}$, and the corresponding leaves are biholomorphic to $\C^k$.
\end{proof}

\begin{theorem}[Generic leaf-equivalence in the high-dimensional regime]
	\label{thm:high-regime-stability}
	Let $I\subset\{1,\dots,n\}$, let $m=|I|$, and assume that $m\geq2k$.
	Let
	\[
		\mathcal C_I^{\mathrm{rk}} = \left\{ A:\C^k\longrightarrow\C^m: \rank_{\C}A=k \right\}
	\]
	be the space of restricted weight configurations of complex rank $k$.
	For $A\in\mathcal C_I^{\mathrm{rk}}$, put
	\[
		V_A=A(\C^k)\subset\C^m.
	\]
	Then
	\[
		\mathcal U_I = \left\{ A\in\mathcal C_I^{\mathrm{rk}}: V_A\cap i\R^m=\{0\} \right\}
	\]
	is open and dense in $\mathcal C_I^{\mathrm{rk}}$.

	Moreover, if $A,A'\in\mathcal U_I$, then there is a smooth diffeomorphism
	\[
		h:(\C^*)^m\longrightarrow(\C^*)^m
	\]
	which maps every leaf of the orbit foliation associated with $A$ onto a leaf of the orbit foliation associated with $A'$.
	Consequently, for every $A\in\mathcal U_I$ there exists a neighborhood
	\[
		\mathcal V\subset\mathcal C_I^{\mathrm{rk}}
	\]
	of $A$ such that, for every $A'\in\mathcal V$, the two orbit foliations on $(\C^*)^m$ are smoothly leaf-equivalent.
\end{theorem}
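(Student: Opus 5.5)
The plan is to prove openness and density of $\mathcal U_I$ by a rank condition, and then to build the leaf-equivalence explicitly in logarithmic coordinates.

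\textbf{Openness and density.} The condition $V_A\cap i\R^m=\{0\}$ is equivalent to injectivity of the real-linear map
\[
	\beta_A:\C^k\oplus\R^m\longrightarrow\C^m\simeq\R^{2m},\qquad (T,s)\longmapsto AT-is .
\]
Injectivity of $\beta_A$ in turn requires $\rank_\C A=k$, which holds on $\mathcal C_I^{\mathrm{rk}}$. Since $2k+m\le 2m$ exactly when $m\ge 2k$, injectivity means that some $(2k+m)\times(2k+m)$ real minor of $\beta_A$ is nonzero.

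The entries of $\beta_A$ are real-linear in $(\operatorname{Re}A,\operatorname{Im}A)$. Hence $\mathcal U_I$ is the complement in $\mathcal C_I^{\mathrm{rk}}$ of the common zero set of finitely many real polynomials. This gives openness at once.

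For density, $\mathcal C_I^{\mathrm{rk}}$ is a nonempty open subset of $\operatorname{Hom}(\C^k,\C^m)\simeq\R^{2km}$. A real polynomial that is not identically zero cannot vanish on a nonempty open set. So it suffices to show that one minor is not identically zero, i.e.\ that $\mathcal U_I\ne\varnothing$.

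For this I would take $AT=(T,\,iT,\,0,\dots,0)$, which uses $m\ge 2k$. Suppose $AT\in i\R^m$. The first block forces $T=iu$ with $u\in\R^k$, and the second block then gives $iT=-u\in i\R^k$, so $u=0$. Thus $A\in\mathcal U_I$. This nonemptiness step is where the hypothesis $m\ge 2k$ enters, and it is the only point needing care.

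\textbf{Leaf-equivalence.} Use the universal covering
\[
	\exp:\C^m\to(\C^*)^m,\qquad w\mapsto(e^{w_1},\dots,e^{w_m}),
\]
whose deck group is $2\pi i\Z^m$. The orbit of $z=e^{w}$ under $A$ is $\exp(w+V_A)$. So the orbit foliation of $A$ is the image of the foliation of $\C^m$ by translates of $V_A$, and this foliation is invariant under the deck group.

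It therefore suffices to find a real-linear automorphism $L$ of $\C^m=\R^m\oplus i\R^m$ such that $L|_{i\R^m}=\mathrm{id}$ and $L(V_A)=V_{A'}$. Such an $L$ commutes with translations by $2\pi i\Z^m$. It therefore descends to a diffeomorphism $h$ of $(\C^*)^m$, and $h$ carries leaves to leaves.

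To construct $L$, note that $V_A\cap i\R^m=\{0\}$ means the real-part projection $\operatorname{Re}:V_A\to\R^m$ is injective. Its image $P_A$ is a real subspace of dimension $2k$, and $V_A$ is the graph $\{x+\varphi_A(x):x\in P_A\}$ of a linear map $\varphi_A:P_A\to i\R^m$.

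Extend $\varphi_A$ and $\varphi_{A'}$ linearly to all of $\R^m$, and pick $g\in GL(\R^m)$ with $gP_A=P_{A'}$. Then define the shear
\[
	L(x+iy)=gx+iy-\varphi_A(x)+\varphi_{A'}(gx).
\]
This map is invertible, with inverse of the same form. It fixes $i\R^m$ pointwise. For $x\in P_A$ it sends $x+\varphi_A(x)$ to $gx+\varphi_{A'}(gx)\in V_{A'}$, and by dimension count $L(V_A)=V_{A'}$.

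\textbf{Final assertion.} For $A\in\mathcal U_I$, openness gives a neighborhood $\mathcal V\subset\mathcal U_I$ of $A$. The construction above then applies to every $A'\in\mathcal V$.
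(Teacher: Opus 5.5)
Your proposal is correct and is essentially the paper's proof. It uses the same rank/minor criterion for $\mathcal U_I$, with nonemptiness witnessed by the same map $T\mapsto(T,iT,0,\dots,0)$, and the same leaf-equivalence: writing $V_A$ as a graph over $P_A=\operatorname{Re}(V_A)$ and applying your shear gives, in logarithmic polar coordinates, exactly the paper's map $H(r,\theta)=(gr,\theta+Dr)$ with $D|_{P_A}=B_{A'}g-B_A$, where $\varphi_A=iB_A$. The differences are cosmetic: you test injectivity of $\beta_A$ instead of $\operatorname{Re}\circ A$ (their kernels correspond via $T\mapsto(T,\operatorname{Im}AT)$), and you work equivariantly on the universal cover $\C^m$ rather than directly on $\R^m\times\T^m$.
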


\begin{proof}
	We divide the proof into three steps. First, we establish that $\mathcal U_I$ is open and dense. We then obtain a normal form for the foliation, which is used in the final step to construct the desired leaf-equivalence.

	\smallskip
	\noindent\emph{Step 1: Openness and density of $\mathcal U_I$.}
	For $A\in\mathcal C_I^{\mathrm{rk}}$, consider the real-linear map
	\[
		\operatorname{Re}\circ A:\C^k\longrightarrow\R^m,
	\]
	where $\C^k$ is regarded as a real vector space of dimension $2k$.
	We claim that
	\[
		V_A\cap i\R^m=\{0\}
	\]
	if and only if $\operatorname{Re}\circ A$ is injective.
	Indeed, if $A(T)\in i\R^m$, then $\operatorname{Re}(A(T))=0$.
	Thus injectivity of $\operatorname{Re}\circ A$ implies $T=0$, and hence $A(T)=0$.
	Conversely, if $\operatorname{Re}\circ A$ has a nonzero kernel, then there is a nonzero $T\in\C^k$ with $A(T)\in i\R^m$; since $A$ has complex rank $k$, it is injective, so $A(T)\neq0$.

	Consequently, $\mathcal U_I$ is the set of $A$ for which the real $m\times 2k$ matrix of $\operatorname{Re}\circ A$ has maximal rank $2k$.
	Because $m\geq2k$, this is an open condition.
	It is dense because it is the nonvanishing of at least one real $2k\times2k$ minor, and this condition is nonempty: for example, the complex-linear map
	\[
		A_0(t_1,\dots,t_k) =(t_1,\dots,t_k,it_1,\dots,it_k,0,\dots,0)
	\]
	has $\operatorname{Re}\circ A_0$ of real rank $2k$.

	\smallskip
	\noindent\emph{Step 2: A normal form for the foliation.}
%	Fix $A\in\mathcal U_I$.
%	Since the real-part projection restricts to an isomorphism
%	\[
%		\operatorname{Re}:V_A\longrightarrow P_A:=\operatorname{Re}(V_A) \subset\R^m,
%	\]
%	every vector of $V_A$ is uniquely of the form
%	\[
%		x+iB_Ax,\qquad x\in P_A,
%	\]
%	for a real-linear map $B_A:P_A\to\R^m$.
%	Since $V_A$ is a complex vector subspace, multiplication by $i$ preserves $V_A$; it follows that $B_A(P_A)\subset P_A$ and that
%	\[
%		B_A^2=-\operatorname{id}_{P_A}.
%	\]
Fix $A\in\mathcal U_I$. Since the real-part projection
\[
    \operatorname{Re}:V_A\longrightarrow
    P_A:=\operatorname{Re}(V_A)\subset\R^m
\]
is an isomorphism, $V_A$ is the graph of a unique real-linear map
$B_A:P_A\to\R^m$; namely,
\[
    V_A=\{x+iB_Ax:x\in P_A\}.
\]
Since $V_A$ is a complex vector subspace,
\[
    i(x+iB_Ax)=-B_Ax+ix\in V_A.
\]
It follows that $B_Ax\in P_A$ and $B_A^2x=-x$ for every $x\in P_A$.
Hence
\[
    B_A(P_A)\subset P_A,
    \qquad
    B_A^2=-\operatorname{id}_{P_A}.
\]

%	Use logarithmic polar coordinates on $(\C^*)^m\cong\R^m\times\T^m$,
%	\[
%		z\longmapsto\bigl((\log|z_j|)_{j=1}^m,(\arg z_j)_{j=1}^m\bigr).
%	\]
%	In these coordinates, the leaves of the orbit foliation associated with
%	$A$ are
%	\[
%		\mathcal L^A_{(r,\theta)} = \left\{(r+x,\theta+B_Ax):x\in P_A \right\},
%	\]
%	where the second component is understood modulo $2\pi\Z^m$.

Using logarithmic polar coordinates
\[
    (\C^*)^m\cong\R^m\times\T^m,
    \qquad
    z\longmapsto
    \bigl((\log|z_j|)_{j=1}^m,(\arg z_j)_{j=1}^m\bigr),
\]
the leaves of the orbit foliation associated with $A$ take the form
\[
    \mathcal L^A_{(r,\theta)}
    =
    \{(r+x,\theta+B_Ax):x\in P_A\},
\]
where the second component is taken modulo $2\pi\Z^m$.

	\smallskip
	\noindent\emph{Step 3: Construction of the leaf-equivalence.}
	Let $A,A'\in\mathcal U_I$, and write
	\[
		P=P_A,\qquad B=B_A,\qquad P'=P_{A'},\qquad B'=B_{A'}.
	\]
	Since $P$ and $P'$ are both $2k$-dimensional subspaces of $\R^m$ and
	$GL(m,\R)$ acts transitively on the Grassmannian $\mathrm{Gr}(2k,m)$,
	there exists a real-linear automorphism $L:\R^m\to\R^m$ with $L(P)=P'$.
	Define a real-linear map $D:P\to\R^m$ by
	\[
		D|_P=B'\circ L|_P-B,
	\]
	and extend $D$ arbitrarily to a real-linear map $D:\R^m\longrightarrow\R^m$.
	Now define
	\[
		H:\R^m\times\T^m\longrightarrow\R^m\times\T^m, \qquad H(r,\theta)=(Lr,\theta+Dr).
	\]
	This is a smooth diffeomorphism, with inverse
	\[
		H^{-1}(r',\theta') = \left(L^{-1}r',\theta'-DL^{-1}r'\right).
	\]

	For $x\in P$, one has
	\[
		\begin{aligned}
			H(r+x,\theta+Bx) & =(Lr+Lx,\theta+Dr+Bx+Dx) \\ &=(Lr+Lx,\theta+Dr+B'Lx).
		\end{aligned}
	\]
	Since $Lx\in P'$, this belongs to the $A'$-leaf through $H(r,\theta)$.
	Hence $H$ maps every $A$-leaf onto an $A'$-leaf.
	Transporting $H$ back through logarithmic polar coordinates yields the required diffeomorphism $h:(\C^*)^m\to(\C^*)^m$.

	Finally, since $\mathcal U_I$ is open, every $A\in\mathcal U_I$ has a neighborhood $\mathcal V\subset\mathcal U_I$, and the preceding construction applies to every $A'\in\mathcal V$.
\end{proof}

The preceding theorem gives the complementary rigidity statement in the high-dimensional regime: on the open dense set $\mathcal U_I$, the orbit foliation on $\stratS{I}\cong(\C^*)^m$ is locally constant up to smooth leaf-equivalence.

Theorem~\ref{thm:threshold} shows that the topology of the leaves is rigid in the two extreme regimes, but becomes arithmetic in the intermediate one.
Thus the threshold region 
\[
	k<|I|<2k
\] 
is precisely where the diffeomorphism type of the leaves may change under perturbations of the weights.

This is the mechanism that obstructs stratified stability.
Indeed, any notion of stability which preserves the orbit stratification together with the diffeomorphism types of the leaves must also preserve the arithmetic ranks of the groups $G_z^{\mathrm{eff}}$.
The preceding theorem shows that, in the threshold region, these ranks are controlled by lattice intersections and hence are not purely geometric invariants.

\begin{lemma}[Lattice planes]
	\label{lem:lattice-instability}
	Let $1\le d<m$ and let $\Lambda_0=(2\pi i\Z)^m\subset i\R^m$.
	In the Grassmannian of real $d$-planes in $i\R^m$, the set of planes meeting $\Lambda_0$ nontrivially is dense, and so is its complement.
\end{lemma}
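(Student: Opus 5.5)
Identify $i\R^m$ with $\R^m$ and $\Lambda_0$ with $2\pi\Z^m$; rescaling by $2\pi$ reduces everything to the Grassmannian $\mathrm{Gr}(d,m)$ of real $d$-planes in $\R^m$ and the lattice $\Z^m$. A $d$-plane $W$ meets $\Z^m\setminus\{0\}$ iff it contains a nonzero rational vector, i.e.\ iff it contains a line spanned by some $v\in\Z^m\setminus\{0\}$. The two density claims are handled separately: the first by approximating with planes that contain a rational vector, the second by a Baire category argument.

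For density of the planes meeting the lattice, I would take an arbitrary $W\in\mathrm{Gr}(d,m)$ with basis $w_1,\dots,w_d$. I would approximate $w_1$ by a rational vector $v_1$ (rationals are dense in $\R^m$) and keep $w_2,\dots,w_d$ fixed. For $v_1$ close enough to $w_1$, the vectors $v_1,w_2,\dots,w_d$ remain linearly independent. Their span $W'$ is close to $W$ in the Grassmannian topology, since the map from frames to planes is continuous. Clearing denominators, some nonzero integer multiple of $v_1$ lies in $\Z^m\cap W'$, so $W'$ meets $\Lambda_0$ nontrivially. This step is routine.

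For density of the complement, for each $v\in\Z^m\setminus\{0\}$ I would set $Z_v=\{W\in\mathrm{Gr}(d,m): v\in W\}$. This is a closed subset, identified with $\mathrm{Gr}(d-1,m-1)$ of planes in $\R^m/\R v$, with dimension $(d-1)(m-d)$. Since $d<m$, this is strictly less than $\dim \mathrm{Gr}(d,m)=d(m-d)$. A closed submanifold of positive codimension has empty interior, so each $Z_v$ is nowhere dense. The set of planes meeting the lattice is the countable union $\bigcup_v Z_v$. Because $\mathrm{Gr}(d,m)$ is a compact manifold and hence a Baire space, the complement of this countable union of nowhere dense closed sets is dense (indeed residual).

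The main point needing care is the nowhere-density of $Z_v$, which is exactly where the hypothesis $d<m$ enters. I would give an elementary proof rather than rely on dimension counting. Given $W\ni v$, pick $u\notin W$, which exists since $d<m$. Replace a basis vector $w$ of $W$ that has a nonzero component along $v$ in a complementary decomposition by $w+\varepsilon u$. The resulting plane $W_\varepsilon$ is arbitrarily close to $W$, and for all but finitely many small $\varepsilon$ it does not contain $v$: containing $v$ is a polynomial condition in $\varepsilon$ that fails at generic $\varepsilon$.

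Alternatively, one can choose coordinates so that $W=\R^d\times\{0\}$ and $v=e_1$. Nearby planes are graphs of linear maps $M:\R^d\to\R^{m-d}$. Such a graph contains $e_1$ iff $Me_1=0$, and this fails for generic $M$. This graph-chart description also gives the closedness and the codimension $m-d$ directly.
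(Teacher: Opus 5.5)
Your proof is correct and takes essentially the same route as the paper: the first claim comes from density of planes containing a rational vector, and the second from Baire category applied to the countably many closed sets $Z_v=\{W : v\in W\}$. Your explicit proof that each $Z_v$ is nowhere dense, via the graph chart or via the perturbation $w+\varepsilon u$ (which in fact excludes $v$ for every $\varepsilon\neq 0$), fills in a step the paper only asserts. Being a proper closed subset does not by itself give empty interior.
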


\begin{proof}
	Identify $i\R^m$ with $\R^m$ and $\Lambda_0$ with $\Z^m$.
	Rational $d$-planes are dense and meet $\Z^m$ in lattices of rank $d$.
	Conversely, for every $0\ne v\in\Z^m$, the condition $v\in P$ defines a proper closed subset of the Grassmannian.
	The union of these subsets is countable and has empty interior; its complement is therefore dense.
\end{proof}

\begin{lemma}[Local realization of intersection planes]
	\label{lem:realization}
	Let $W=i\R^m\subset\C^m$, and let $V\subset\C^m$ be a complex $k$-plane for which
	\[
		P=V\cap W
	\]
	has the minimal possible dimension $d=2k-m>0$.
	Then every sufficiently small deformation $P'$ of $P$ through real $d$-planes in $W$ is of the form
	\[
		P'=V'\cap W
	\]
	for a sufficiently small deformation $V'$ of $V$ through complex $k$-planes.
\end{lemma}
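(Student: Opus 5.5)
The plan is to realize the deformation by a complex-linear automorphism of $\C^m$ that preserves $W=i\R^m$. The key observation is that $W$ is a maximal totally real subspace: $W\oplus iW=\C^m$. Hence every real-linear automorphism of $W$ extends uniquely to a complex-linear automorphism of $\C^m$. Concretely, if $M\in GL(m,\R)$ and we let it act on $i\R^m$ by $iy\mapsto iMy$, its complexification is just $z\mapsto Mz$ on $\C^m$. This map is complex linear, preserves $W$, and depends continuously on $M$.

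Step 1 is to move $P$ to $P'$ by a real-linear automorphism of $W$ close to the identity. This is the standard local section of $GL(W)\to\mathrm{Gr}_d(W)$. Fix a real complement $Q$ of $P$ in $W$. Every $d$-plane $P'$ sufficiently close to $P$ is the graph of a unique small real-linear map $\varphi:P\to Q$, and $\varphi$ depends continuously on $P'$ in the standard affine chart of the Grassmannian. Put
\[
	g_\varphi=\mathrm{id}_W+\varphi\circ\pi_P,
\]
where $\pi_P:W=P\oplus Q\to P$ is the projection. Then $g_\varphi\in GL(W)$, it is close to the identity, and $g_\varphi(P)=\{x+\varphi(x):x\in P\}=P'$.

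Step 2 is to complexify. Let $\tilde g$ be the complex-linear extension of $g_\varphi$ to $\C^m$, so that $\tilde g(W)=W$ and $\tilde g$ is close to $\mathrm{id}_{\C^m}$. Define $V'=\tilde g(V)$. This is a complex $k$-plane because $\tilde g$ is complex linear and invertible, and it is a small deformation of $V$ because $\tilde g$ is near the identity and $V\mapsto\tilde g(V)$ is continuous on the complex Grassmannian. Since $\tilde g$ is a bijection preserving $W$,
\[
	V'\cap W=\tilde g(V)\cap\tilde g(W)=\tilde g(V\cap W)=g_\varphi(P)=P'.
\]

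I do not expect a genuine obstacle. The only point needing care is the continuity bookkeeping, namely that "sufficiently small deformation" is measured in the Grassmannian charts, and the graph-chart construction gives this directly. The hypothesis that $d=2k-m$ is minimal, so that $V$ and $W$ are transverse, is not actually needed for the construction. It is needed only to ensure that nearby $V'$ still meet $W$ in dimension exactly $d$, which keeps the statement consistent with the dimension count in Theorem~\ref{thm:threshold}.
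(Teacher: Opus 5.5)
Your proposal is correct and matches the paper's proof: take a real-linear automorphism $g$ of $W$ close to the identity with $g(P)=P'$, extend it complex-linearly to a map $g_{\C}$ that preserves $W$, and set $V'=g_{\C}(V)$, so that $V'\cap W=g(P)=P'$. Your graph-chart formula $g_\varphi=\mathrm{id}_W+\varphi\circ\pi_P$ just makes explicit the choice of $g$ near the identity, which the paper asserts without giving a construction.
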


\begin{proof}
	Choose a real linear automorphism $g$ of $W$, arbitrarily close to the identity, with $g(P)=P'$.
	Its complex-linear extension $g_{\C}:\C^m\to\C^m$ preserves $W$.
	Setting $V'=g_{\C}(V)$ gives
	\[
		V'\cap W=g_{\C}(V)\cap W=g(V\cap W)=g(P)=P'.
	\]
	Since $g$ can be chosen arbitrarily close to the identity, so can $V'$
	be chosen close to $V$.
\end{proof}

\begin{theorem}[Arithmetic instability in the threshold region]
	\label{thm:arithmetic-instability}
	Let $I\subset\{1,\dots,n\}$ satisfy $k<|I|=m<2k$, and assume that the restricted weights $\{\Lambda_j:j\in I\}$ have complex rank $k$.
	Then arbitrarily close to every such configuration there are configurations, still of restricted rank $k$, for which the effective isotropy in $\stratSS{I}$ is trivial, and configurations for which it is nontrivial.
	Consequently, the homeomorphism type of the leaves in $\stratSS{I}$ is not locally constant in the space of weight configurations.
\end{theorem}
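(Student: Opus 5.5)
The plan is to work with the linear-algebraic data attached to the restricted configuration. Write $A=A_I:\C^k\to\C^m$, which is injective since the rank is $k$, and put $V=A(\C^k)$ and $W=i\R^m$. By Lemma~\ref{lem:isotropy-splitting} and part (2) of Theorem~\ref{thm:threshold}, the effective isotropy on $\stratSS{I}$ is nontrivial exactly when $V\cap(2\pi i\Z)^m\neq\{0\}$. Moreover $V\cap(2\pi i\Z)^m=(V\cap W)\cap(2\pi i\Z)^m$, so everything depends only on the real plane $P=V\cap W$. The dimension count in Theorem~\ref{thm:threshold} gives $\dim_\R P\ge d:=2k-m$, with $0<d<m$.

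The first step is to reduce to the case $\dim_\R P=d$. Transversality of $V$ and $W$, meaning $V+W=\C^m$, is an open dense condition on complex $k$-planes, and the complex $k$-planes near $V$ are exactly the images $A'(\C^k)$ of maps $A'$ near $A$. So an arbitrarily small perturbation of $A$, still of rank $k$, makes $\dim_\R(V\cap W)=d$ exactly. It therefore suffices to find both kinds of configuration near an $A$ with $\dim P=d$.

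Next fix such an $A$. Lemma~\ref{lem:lattice-instability} applies since $1\le d<m$. It supplies real $d$-planes $P'\subset W$, arbitrarily close to $P$, with $P'\cap(2\pi i\Z)^m\neq\{0\}$ (rational planes), and also planes $P''$ arbitrarily close to $P$ with $P''\cap(2\pi i\Z)^m=\{0\}$. Lemma~\ref{lem:realization} realizes each of these as $V'\cap W$ for some complex $k$-plane $V'=g_\C(V)$ close to $V$. Concretely, $A'=g_\C\circ A$ is close to $A$ and still has rank $k$, because $g_\C$ is invertible. Since $g_\C$ is close to the identity, the original weights, which lie in the Poincaré domain and are generic, can be taken to stay in those open conditions after the perturbation. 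The main point needing care is here: the perturbation must be made only in the weights indexed by $I$. That is achieved by letting $g_\C$ act only on the coordinates in $I$, which replaces $\Lambda_j$ ($j\in I$) by suitable linear combinations of those weights near the originals, while leaving the other weights unchanged.

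Finally, the topological conclusion follows from Lemma~\ref{lem:isotropy-splitting}, which identifies each leaf with $F_I/G_z^{\mathrm{eff}}\cong\R^{2k-r}\times\T^r$, where $r=\rank G_z^{\mathrm{eff}}$. Trivial effective isotropy gives $r=0$ and leaves homeomorphic to $\R^{2k}$, which are contractible. Nontrivial effective isotropy gives $r\ge1$ and leaves with nontrivial fundamental group $\Z^r$. These are not homeomorphic, so the homeomorphism type is not locally constant.
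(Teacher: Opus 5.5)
Your proposal is correct and follows essentially the same route as the paper. You first perturb so that $A_I(\C^k)$ is transverse to $i\R^m$, then combine Lemma~\ref{lem:lattice-instability} with Lemma~\ref{lem:realization} to produce nearby complex $k$-planes meeting or avoiding $(2\pi i\Z)^m$, and finally read off the leaf type $\R^{2k-r}\times\T^r$ from Lemma~\ref{lem:isotropy-splitting}. Your additions only make explicit what the paper leaves implicit: the concrete choice $A'=g_\C\circ A$, the check $0<d<m$, the preservation of the open Poincaré and genericity conditions, and the contractible-versus-$\pi_1\cong\Z^r$ argument separating $r=0$ from $r\ge1$.
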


\begin{proof}
	Let $V=A_I(\C^k)\subset\C^m$.
	A sufficiently small perturbation makes $V$ transverse to $W=i\R^m$, hence makes
	\[
		P=V\cap W
	\]
	have its minimal dimension $d=2k-m>0$.
	By Lemma~\ref{lem:realization}, nearby deformations of $P$ are realized by nearby complex $k$-planes $V'$.
	By Lemma~\ref{lem:lattice-instability}, we may choose such a nearby $P'$ either avoiding or meeting $(2\pi i\Z)^m$ nontrivially.
	Since every nearby complex $k$-plane is the image of an injective map $A'_I:\C^k\to\C^m$, it is realized by a nearby restricted weight configuration.

	For $z\in\stratSS{I}$, Lemma~\ref{lem:isotropy-splitting} identifies the effective isotropy with the inverse image of
	\[
		V'\cap(2\pi i\Z)^m.
	\]
	Thus it is trivial in the first case and nontrivial in the second.
	If its rank is $r$, the corresponding leaf is diffeomorphic to
	\[
		\R^{2k-r}\times\T^r,
	\]
	so these perturbations change the leaf topology.
\end{proof}

%\begin{definition}
%	Two diagonal holomorphic actions are said to be \emph{stratified topologically equivalent} if there exists a homeomorphism $h:\C^n\longrightarrow\C^n$ such that:
%	
%	\begin{enumerate}
%
%		\item $h$ maps each coordinate stratum onto the corresponding coordinate
%		      stratum;
%
%		\item $h$ maps every leaf of the first orbit foliation onto a leaf of
%		      the second;
%
%		\item for every coordinate stratum, the homeomorphism type of its leaves
%		      is preserved.
%
%	\end{enumerate}
%\end{definition}

\begin{definition}
Two diagonal holomorphic actions are said to be \emph{stratified topologically equivalent} if there exists a homeomorphism
\[
    h:\C^n\longrightarrow\C^n
\]
that preserves each coordinate stratum and maps every leaf of the first orbit foliation onto a leaf of the second. Moreover, on each coordinate stratum, the two orbit foliations have the same leaf homeomorphism types.
\end{definition}

\begin{corollary}[Failure of stratified stability]
	\label{cor:no-stratified-stability}
	Assume that the orbit stratification contains a coordinate stratum $\stratS{I}$ in the threshold region 
	$$ 
		k<|I|<2k.
	$$
	Then the orbit foliation is not stratified topologically stable.
	More precisely, arbitrarily small perturbations of the weight configuration may change the homeomorphism type of the leaves in $\stratS{I}$.
\end{corollary}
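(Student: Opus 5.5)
The plan is to derive the corollary directly from Theorem~\ref{thm:arithmetic-instability}, combined with the definition of stratified topological equivalence. Fix the original weight configuration $\Lam$, which is generic and lies in the Poincaré domain, and fix a stratum $\stratS{I}$ with $k<|I|=m<2k$. By genericity, the restricted weights $\{\Lambda_j:j\in I\}$ have complex rank $k$, so Theorem~\ref{thm:arithmetic-instability} applies to the restricted configuration $A_I$.

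\textbf{Step 1: lift the restricted perturbation to a global one.} Theorem~\ref{thm:arithmetic-instability} provides restricted configurations $A_I'$ and $A_I''$, arbitrarily close to $A_I$ and both of rank $k$. In one of them the effective isotropy on $\stratS{I}$ is trivial; in the other it is nontrivial, of rank $r\ge1$. We lift each to a full configuration $\Lam'$, $\Lam''$ by changing only the weights $\Lambda_j$ with $j\in I$ and leaving all other weights fixed.

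\textbf{Step 2: check that the lifts stay admissible.} The Poincaré condition $0\notin\convexhull{\Lam}$ is open, since the cone $C_{\Lam}$ is defined by finitely many strict inequalities. The maximal-rank genericity condition is also open, being finitely many nonvanishing minors. Hence both conditions survive any sufficiently small perturbation. This is the only point that needs care, and it is routine. One detail must be confirmed: Theorem~\ref{thm:arithmetic-instability} realizes the perturbed plane $V'$ as the image of some injective $A_I'$ close to $A_I$, and we need $A_I'$ close in the space of weights, not merely $V'$ close in the Grassmannian. This holds because $V'=g_{\C}(V)$ with $g$ near the identity, as in Lemma~\ref{lem:realization}, so we may take $A_I'=g_{\C}\circ A_I$.

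\textbf{Step 3: compare leaf topologies and conclude.} By Lemma~\ref{lem:isotropy-splitting} and the subsequent discussion, leaves in $\stratS{I}$ for $\Lam'$ are homeomorphic to $\R^{2k}$, while leaves for $\Lam''$ are homeomorphic to $\R^{2k-r}\times\T^r$. These are not homeomorphic, since for $r\ge1$ the second is not simply connected, or alternatively they have different first homology. So at least one of $\Lam'$, $\Lam''$ has leaves in $\stratS{I}$ of a homeomorphism type different from that of $\Lam$, and that configuration is arbitrarily close to $\Lam$. By the definition, any stratified topological equivalence must preserve the leaf homeomorphism type on each coordinate stratum, so no such equivalence exists between $\Lam$ and that perturbation. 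Therefore $\Lam$ is not stratified topologically stable.
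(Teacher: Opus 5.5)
Your proposal is correct and follows the same route as the paper. Both deduce the corollary from Theorem~\ref{thm:arithmetic-instability}, using that leaves in $\stratS{I}$ are $\R^{2k-r}\times\T^r$ with $r=\operatorname{rank} G_z^{\mathrm{eff}}$, so the arbitrarily close configurations with trivial and with nontrivial effective isotropy cannot both share the leaf homeomorphism type of $\Lam$; this rules out stratified topological equivalence. Your extra checks make explicit what the paper's short proof leaves implicit: lifting the restricted perturbation to a full configuration that stays in the open Poincaré and maximal-rank classes, and realizing the nearby plane by a nearby map of the form $g_{\C}\circ A_I$. Strictly, if $A_I(\C^k)$ is not already transverse to $i\R^m$, you should compose $g_{\C}$ with the preliminary transversality perturbation rather than with $A_I$ itself.
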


\begin{proof}
	By Theorem~\ref{thm:arithmetic-instability}, in the threshold region the rank of the discrete isotropy group $G_z^{\mathrm{eff}}$ is not locally constant under perturbations of the weights.
	Since the homeomorphism type of the leaves in $\stratS{I}$ is determined by this rank, arbitrarily small perturbations may change the homeomorphism type of the leaves in that stratum.

	Therefore no stratified topological equivalence, in the sense defined above, can persist under all sufficiently small perturbations of the weight configuration.
	Hence the orbit foliation is not stratified topologically stable.
\end{proof}

The loss of stability is therefore an intrinsic consequence of the threshold phenomenon.

\section{Local transverse geometry}\label{sec:transverse}

The previous sections studied the orbit foliation associated with a generic diagonal holomorphic action from the point of view of its leafwise geometry.
In this section we investigate its transverse geometry.
For regular foliations, the theory of transversally holomorphic structures is classical and of fundamental importance; see, for instance, \cite{Ha,GHS,GM1,GM2,GM3, GGmS, GN,LN,Bru,Bru-Gh}.
Unlike the regular foliations appearing in the classical theory, the orbit foliations we envisage here are singular: the leaves have different dimensions, and the leaf dimension jumps from one stratum to another.
An appropriate framework for studying such decompositions is the theory of Stefan--Sussmann singular foliations.

Recall that a Stefan--Sussmann singular foliation on a manifold $M$ is a partition of $M$ into connected immersed submanifolds, called leaves, which is locally generated by a family of smooth (or holomorphic) vector fields.
The orbit theorem of Stefan and Sussmann asserts that the leaves are precisely the maximal integral manifolds of the corresponding distribution.

In our setting, the orbit decomposition of the diagonal holomorphic action is a holomorphic Stefan--Sussmann singular foliation, whose leaves are the $\C^k$-orbits.
Our objective is to show that this foliation carries a canonical local transverse holomorphic structure.

\subsection{Stefan--Sussmann transversals}
The starting point for the transverse geometry is the notion of a local transversal to the orbit foliation.
Since the leaves have varying dimensions, the codimension of such a transversal depends on the point.

%Let $\mathcal F$ denote the orbit foliation associated with the diagonal holomorphic action of $\C^k$ on $\C^n$.
%The leaf through $p\in\C^n$ is the orbit $\leaf{p}$, and we write 
%$$ 
%	q(p)=\dim_{\C}\leaf{p}.
%$$
%If the leaf through $p$ has complex dimension $q(p)$, then a transversal at $p$ has complex dimension $n-q(p)$.

Let $\mathcal F$ denote the orbit foliation associated with the
diagonal holomorphic action of $\C^k$ on $\C^n$. The leaf through
$p\in\C^n$ is the orbit $\leaf{p}$, which has complex dimension
$q(p)$. Consequently, a transversal at $p$ has complex dimension
$n-q(p)$.

\begin{definition}
	A \emph{Stefan--Sussmann transversal} to $\leaf{}$ at $p$ is a germ of a complex submanifold $\Sigma_p\subset\C^n$ such that $p\in\Sigma_p$ and 
 	$$ 
		T_p\C^n = T_p\leaf{p} \oplus T_p\Sigma_p.
	$$
	In particular, $\dim_{\C}\Sigma_p=n-q(p)$.
\end{definition}

\begin{lemma}
	For every $p\in\C^n$ there exists a Stefan--Sussmann transversal $\Sigma_p$ to $\leaf{}$ at $p$.
\end{lemma}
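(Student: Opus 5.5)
The existence is a purely linear-algebraic statement: we will take $\Sigma_p$ to be (the germ at $p$ of) an affine complex subspace through $p$ complementary to the tangent space of the leaf. First we identify $T_p\leaf{p}$. By the proof of Proposition~\ref{prop:leaf-dimension}, the orbit map $T\mapsto\Phi(T,p)$ is a holomorphic immersion modulo its kernel. Its differential at $T=0$ has image
\[
	T_p\leaf{p}=\operatorname{span}_{\C}\{X_1(p),\dots,X_k(p)\}\subset T_p\C^n\cong\C^n,
\]
where the $X_\nu$ are the generators \eqref{eq:infinitesimalGenerators}. This is a complex-linear subspace of complex dimension $q(p)$. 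Since the leaf is an orbit of a Lie group action, it is an immersed complex submanifold whose tangent space at $p$ is exactly this span. This holds at every point of the leaf, by equivariance, because $\Phi(T,\cdot)$ is a linear automorphism carrying $X_\nu(p)$ to $X_\nu(\Phi(T,p))$.

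Next, we choose a complex-linear complement $N_p\subset\C^n$ with $\C^n=T_p\leaf{p}\oplus N_p$. Concretely, $N_p$ can be spanned by $n-q(p)$ standard basis vectors $e_j$ that complete a basis extracted from $X_1(p),\dots,X_k(p)$; this exists by the Steinitz exchange lemma. We then set $\Sigma_p$ to be the germ at $p$ of the affine subspace $p+N_p$. This is a complex submanifold through $p$ with $T_p\Sigma_p=N_p$, so $T_p\C^n=T_p\leaf{p}\oplus T_p\Sigma_p$ and $\dim_\C\Sigma_p=n-q(p)$, which is what the definition requires.

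The only point needing care is that $T_p\leaf{p}$ is really the tangent space of the (possibly only immersed, possibly non-closed) leaf, and not merely the span of the generators. Near $p$ the leaf might accumulate on itself, as in Remark~\ref{rem:same-support}, so one should work with the germ of the plaque $\{\Phi(T,p):T\text{ near }0\}$. By Lemma~\ref{lem:isotropy-splitting}, the restriction of the orbit map to a small ball in $F_I$ is an injective holomorphic immersion, since $G_p^{\mathrm{eff}}$ is discrete. Its image is therefore a germ of embedded complex submanifold with the stated tangent space. With this plaque interpretation, the transversality condition at the single point $p$ is exactly the direct-sum decomposition above, and no further argument is needed.
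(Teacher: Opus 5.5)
Your proof is correct and follows the same approach as the paper: identify $T_p\leaf{p}$ as a complex-linear subspace of $T_p\C^n$, choose a complex-linear complement, and take a complex submanifold germ through $p$ tangent to it. Your affine germ $p+N_p$ is one concrete instance of the unspecified $\Sigma_p$ in the paper. The plaque discussion via the isotropy-splitting lemma is accurate, though not needed, since only the tangent space at $p$ enters the definition.
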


\begin{proof}
	Since every orbit is a complex immersed submanifold of $\C^n$, the tangent space $T_p\leaf{p}$ is a complex vector subspace of $T_p\C^n$.
	Therefore it admits a complex linear complement $W_p$.
	Let $\Sigma_p$ be a complex submanifold through $p$ satisfying $T_p\Sigma_p=W_p$.
	Then
	$$
		T_p\C^n = T_p \leaf{p} \oplus T_p\Sigma_p,
	$$
	so $\Sigma_p$ is a Stefan--Sussmann transversal at $p$.
\end{proof}

The next result shows that transversality persists locally along the transversal.

\begin{lemma}
	Let $\Sigma_p$ be a  Stefan--Sussmann transversal at $p$.
	Then, after shrinking $\Sigma_p$ if necessary, 
	\[
		T_x\C^n = T_x\leaf{x} + T_x\Sigma_p
	\]
	for every $x\in\Sigma_p$.
\end{lemma}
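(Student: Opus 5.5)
The plan is to use lower semicontinuity of rank for the holomorphic vector fields $X_1,\dots,X_k$ of \eqref{eq:infinitesimalGenerators}. At every point $x$ these fields span $T_x\leaf{x}$; this is the computation in the proof of Proposition~\ref{prop:leaf-dimension}. The sum $T_x\leaf{x}+T_x\Sigma_p$ is therefore the image of the linear map
\[
	M_x:\C^k\oplus T_x\Sigma_p\longrightarrow T_x\C^n,\qquad (c,v)\longmapsto \sum_{\nu=1}^k c_\nu X_\nu(x)+v.
\]
This map depends holomorphically, and in particular continuously, on $x\in\Sigma_p$.

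First choose local holomorphic coordinates on $\Sigma_p$ near $p$ and a holomorphic frame $v_1(x),\dots,v_{n-q(p)}(x)$ of $T_x\Sigma_p$. Then $M_x$ is represented by the $n\times(k+n-q(p))$ matrix with columns $X_1(x),\dots,X_k(x),v_1(x),\dots,v_{n-q(p)}(x)$. Its entries are continuous functions of $x$.

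At $x=p$ this matrix has rank $n$. Indeed, the columns $X_\nu(p)$ span $T_p\leaf{p}$, the columns $v_i(p)$ span $T_p\Sigma_p$, and by the definition of a Stefan--Sussmann transversal these two spaces sum to $T_p\C^n$. Hence some $n\times n$ minor is nonzero at $p$. By continuity this minor stays nonzero on a neighborhood of $p$ in $\Sigma_p$. Shrinking $\Sigma_p$ to that neighborhood gives $\rank M_x=n$, which is exactly $T_x\C^n=T_x\leaf{x}+T_x\Sigma_p$.

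The only real point is the identification $T_x\leaf{x}=\operatorname{span}_\C\{X_\nu(x)\}$ at every $x$, including points where the support $I(x)$ differs from $I(p)$. This holds because the leaf is the $\C^k$-orbit, so its tangent space at $x$ is the image of $d\Phi_x$. The sum need not be direct away from $p$: when $I(x)\supsetneq I(p)$ the leaf dimension can jump up, which is why the statement uses a sum rather than $\oplus$. The main obstacle is thus only to avoid claiming directness. Lower semicontinuity of rank yields exactly the spanning property, and nothing more is needed.
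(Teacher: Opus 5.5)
Your proof is correct and is essentially the paper's argument. The spanning condition at $p$ is a nonvanishing-minor condition on continuously varying vectors, so it persists near $p$, and the fundamental fields are tangent to the leaves. The only cosmetic difference is that you use all $k$ generators and the equality $T_x\leaf{x}=\operatorname{span}_\C\{X_\nu(x)\}$, whereas the paper selects $q(p)$ fields forming a basis at $p$ and needs only the inclusion $\operatorname{span}_\C\{X_j(x)\}\subset T_x\leaf{x}$.
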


%azer
\begin{proof}
	Let $q=q(p)$.
	Since $T_p\leaf{p}$ has complex dimension $q$, we may choose fundamental vector fields.
	$X_1,\ldots,X_q$ of the $\C^k$-action such that $X_1(p),\ldots,X_q(p)$ form a basis of $T_p\leaf{p}$.

	Since $\Sigma_p$ is transversal to $\leaf{p}$ at $p$, we have 
	$$ 
		T_p\C^n = \operatorname{span}_{\C}\{X_1(p),\ldots,X_q(p)\} \oplus T_p\Sigma_p.
	$$

	The vector fields $X_1,\ldots,X_q$ are holomorphic, and the tangent spaces $T_x\Sigma_p$ vary holomorphically with $x$.
	Hence the condition 
	\[
		T_x\C^n = \operatorname{span}_{\C}\{X_1(x),\ldots,X_q(x)\} + T_x\Sigma_p
	\]
 	is open.
	Therefore, after shrinking $\Sigma_p$ around $p$, it holds for every $x\in\Sigma_p$.

	Since each $X_j(x)$ is tangent to the orbit $\leaf{x}$, we have 
	$$ 
		\operatorname{span}_{\C}\{X_1(x),\ldots,X_q(x)\} \subset T_x\leaf{x}.
	$$
	Thus $ T_x\C^n = T_x\leaf{x} + T_x\Sigma_p $ for every $x\in\Sigma_p$.
\end{proof}

\subsection{Local product structure}

The local product structure constructed below provides convenient coordinates for the transverse geometry.
Its main role is to prove that the transverse orbit foliation is intrinsic, namely, independent of all auxiliary choices.

\begin{lemma}[Local product chart]\label{lem:Local product chart} 
	Let $\Sigma_p$ be a Stefan--Sussmann transversal at $p$, and let $q=q(p)$.
	Then, after shrinking $\Sigma_p$ if necessary, there exist a neighborhood $B$ of $0$ in $\C^q$, a neighborhood $V$ of $p$ in $\C^n$, and a biholomorphism 
	\[
		\Psi:B\times\Sigma_p\longrightarrow V
	\]
	such that 
	$\Psi(0,y)=y$ for every $y\in\Sigma_p$.
	Moreover, for every $y\in\Sigma_p$, the slice $\Psi(B\times\{y\})$ is contained in the ambient orbit $\leaf{y}$.
\end{lemma}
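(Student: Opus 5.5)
We will build $\Psi$ by flowing the transversal along a $q$-dimensional slice of the group. As in the preceding lemma, choose fundamental vector fields $X_1,\dots,X_q$ of the $\C^k$-action with $X_1(p),\dots,X_q(p)$ a basis of $T_p\leaf{p}$. Concretely, pick vectors $T^1,\dots,T^q\in\C^k$ with $X_\nu=\sum_\mu T^\nu_\mu X_\mu$ (in the notation of \eqref{eq:infinitesimalGenerators}). Then define
\[
	\Psi(s,y)=\Phi\Bigl(\textstyle\sum_{\nu=1}^q s_\nu T^\nu,\;y\Bigr),\qquad s\in\C^q,\ y\in\Sigma_p .
\]
This map is holomorphic in $(s,y)$, since $\Phi$ is holomorphic in both variables (it is given by explicit exponentials). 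Clearly $\Psi(0,y)=\Phi(0,y)=y$. For fixed $y$, the slice $s\mapsto\Psi(s,y)$ lies in the $\C^k$-orbit of $y$, which is $\leaf{y}$. So the last assertion is automatic once $\Psi$ is defined.

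It remains to obtain the biholomorphism, and we will use the holomorphic inverse function theorem at $(0,p)$. The differential there is
\[
	d\Psi_{(0,p)}(\sigma,w)=\sum_\nu \sigma_\nu X_\nu(p)+w,\qquad \sigma\in\C^q,\ w\in T_p\Sigma_p ,
\]
because $\partial_{s_\nu}\Psi(0,p)=X_\nu(p)$ and $\Psi(0,\cdot)$ is the inclusion of $\Sigma_p$. The $X_\nu(p)$ form a basis of $T_p\leaf{p}$, and $T_p\C^n=T_p\leaf{p}\oplus T_p\Sigma_p$. Hence $d\Psi_{(0,p)}$ is a linear isomorphism between spaces of dimension $q+(n-q)=n$.

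The inverse function theorem then gives a neighborhood of $(0,p)$ of product form $B\times\Sigma_p'$, after shrinking $\Sigma_p$ to $\Sigma_p'$. On this neighborhood $\Psi$ is a biholomorphism onto an open set $V\ni p$. Shrinking the domain to a product is harmless, since any neighborhood of $(0,p)$ contains such a product. The restriction is still injective and open, so its image $V$ is open and $\Psi$ is a biholomorphism onto it. The properties $\Psi(0,y)=y$ and $\Psi(B\times\{y\})\subset\leaf{y}$ are preserved under shrinking.

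No step here is a real obstacle. The only point to watch is that the product shape of the domain is compatible with shrinking $\Sigma_p$, which the statement explicitly allows. We also note, without needing it, that nearby points $y$ may have $q(y)>q$, so the slice $\Psi(B\times\{y\})$ can be a proper submanifold of $\leaf{y}$. The statement only claims containment, so this causes no difficulty.
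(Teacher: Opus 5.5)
Your proposal is correct and is essentially the paper's proof. Your map $\Psi(s,y)=\Phi\bigl(\sum_\nu s_\nu T^\nu,y\bigr)$ coincides with the paper's composition of commuting flows $\exp(t_1X_1)\circ\cdots\circ\exp(t_qX_q)(y)$, since the flow of the fundamental field of $T$ is $\Phi(tT,\cdot)$; your differential computation at $(0,p)$ and the application of the holomorphic inverse function theorem are the same as in the paper. Writing $\Psi$ through the global action makes holomorphy and order-independence immediate, but you should not use the symbol $X_\nu$ both for the chosen fields and for the original generators.
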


\begin{proof}
	Choose fundamental vector fields $X_1,\ldots,X_q$ such that $X_1(p),\ldots,X_q(p)$ form a basis of $T_p\leaf{p}$. 
	Since $\Sigma_p$ is a Stefan--Sussmann transversal at $p$, it follows that
	\[
    		T_p\C^n =  \operatorname{span}_{\C}\{X_1(p),\ldots,X_q(p)\} \oplus T_p\Sigma_p.
	\]

	For $t=(t_1,\ldots,t_q)$ sufficiently close to $0$ and $y\in\Sigma_p$ sufficiently close to $p$, define 
	$$ 
		\Psi(t,y) = \exp(t_1X_1)\circ\cdots\circ\exp(t_qX_q)(y).
	$$
%	Since the $\C^k$-action is abelian, the local flows of $X_1,\ldots,X_q$ commute, so the ordering of the factors is irrelevant and $\Psi$ is unambiguously defined.
%	The vector fields $X_1,\ldots,X_q$ are holomorphic fundamental vector fields of the action, so their local flows are holomorphic.
%	Therefore $\Psi$ is holomorphic.
%	Also, $\Psi(0,y)=y.$$
	Since the action is abelian, the fundamental vector fields $X_1,\ldots,X_q$ commute, and so do their local flows. Thus, $\Psi$ is independent of the order of composition. Moreover, the local flows depend holomorphically on both time and the initial point, and hence $\Psi$ is holomorphic. Finally, $\Psi(0,y)=y.$

%	Now we compute the differential of $\Psi$ at $(0,p)$.
%	The derivative in the $t_j$-direction is $X_j(p)$, while the derivative in the $\Sigma_p$-direction is the natural inclusion of $T_p\Sigma_p$ into $T_p\C^n$.
%	Thus 
%	\[
%		d\Psi_{(0,p)} : \C^q\oplus T_p\Sigma_p \longrightarrow T_p\C^n
%	\]
% 	is an isomorphism.
%
%	By the holomorphic inverse function theorem, after shrinking $B$ and $\Sigma_p$ if necessary, $\Psi$ is a biholomorphism onto an open neighborhood $V$ of $p$.
%
%	Finally, for fixed $y\in\Sigma_p$, the map 
%	\[
%		t\longmapsto \Psi(t,y)
%	\]
%	is obtained by composing local flows of fundamental vector fields of the $\C^k$-action.
%	Hence its image is contained in the orbit $\leaf{y}$.
 
	We now compute the differential of $\Psi$ at $(0,p)$. For $(\xi,v)\in\C^q\oplus T_p\Sigma_p$, we have
	\[
		d\Psi_{(0,p)}(\xi,v) = \sum_{j=1}^q \xi_jX_j(p)+v.
	\]
	Since
	\[
		T_p\C^n = \operatorname{span}_{\C}\{X_1(p),\ldots,X_q(p)\} \oplus T_p\Sigma_p,
	\]
	the differential
	\[
    		d\Psi_{(0,p)}: \C^q\oplus T_p\Sigma_p \longrightarrow T_p\C^n
	\]
	is an isomorphism. By the holomorphic inverse function theorem, after shrinking $B$ and $\Sigma_p$ if necessary, $\Psi$ restricts to a biholomorphism onto an open neighborhood $V$ of $p$.

	Finally, for each fixed $y\in\Sigma_p$, the map $t\longmapsto\Psi(t,y)$ is obtained by composing local flows of fundamental vector fields of the action. Its image is therefore contained in the orbit $\leaf{y}$. 
\end{proof}

We shall refer to $\Psi$ as a \emph{local product chart}.

\begin{proposition}[Uniqueness of the transverse foliation]\label{Uniqueness transverse foliation}
	Let $\Sigma_1$ and $\Sigma_2$ be two Stefan--Sussmann transversals at a point $p$.
	Then, after shrinking them if necessary, the transverse orbit foliations induced on $\Sigma_1$ and $\Sigma_2$ are biholomorphically equivalent as germs at $p$.
\end{proposition}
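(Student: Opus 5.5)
The plan is to build a holonomy-type biholomorphism $h:\Sigma_1\to\Sigma_2$ by sliding along the local product chart of Lemma~\ref{lem:Local product chart}, and then to check that it respects the transverse orbit foliations. Both transversals have dimension $n-q$ with $q=q(p)$. Recall that the transverse orbit foliation on $\Sigma_i$ is the partition of $\Sigma_i$ into the germs at $p$ of the sets $\leaf{x}\cap\Sigma_i$, or more precisely into the local plaques cut out on $\Sigma_i$ by the orbits.

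\textbf{Step 1: the projection.} Fix a local product chart $\Psi:B\times\Sigma_1\to V$ for $\Sigma_1$, and let $\pi_1=\mathrm{pr}_2\circ\Psi^{-1}:V\to\Sigma_1$. Shrinking $\Sigma_2$ so that $\Sigma_2\subset V$, set $h=\pi_1|_{\Sigma_2}:\Sigma_2\to\Sigma_1$.

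\textbf{Step 2: $h$ is a local biholomorphism.} At $p$ one has $d\Psi_{(0,p)}(\xi,v)=\sum\xi_jX_j(p)+v$. Hence $\ker d(\pi_1)_p=\operatorname{span}_\C\{X_j(p)\}=T_p\leaf{p}$. Since $T_p\Sigma_2$ is complementary to $T_p\leaf{p}$ and the two transversals have equal dimension, $dh_p$ is an isomorphism. The holomorphic inverse function theorem then lets us shrink $\Sigma_1$ and $\Sigma_2$ so that $h$ is a biholomorphism of germs with $h(p)=p$.

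\textbf{Step 3: $h$ preserves leaves.} For $y\in\Sigma_2$, write $y=\Psi(t,x)$. Then $h(y)=x$, and by Lemma~\ref{lem:Local product chart} the slice $\Psi(B\times\{x\})$ lies in $\leaf{x}$. Therefore $h(y)\in\leaf{y}$; that is, $h$ moves each point within its own ambient orbit. It follows that $h$ maps $\leaf{y}\cap\Sigma_2$ into $\leaf{y}\cap\Sigma_1$. Applying the same argument to $h^{-1}$, built from a chart for $\Sigma_2$ (or directly, since $h^{-1}(x)$ is the unique point of $\Sigma_2$ on the slice through $x$, which lies in $\leaf{x}$), gives equality. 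So $h$ carries the partition of $\Sigma_2$ onto that of $\Sigma_1$.

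\textbf{Main obstacle.} The delicate point is that a global orbit $\leaf{x}$ may return to the neighbourhood $V$ many times, for instance when leaves are nonclosed (Remark~\ref{rem:same-support}). In that case ``$\leaf{x}\cap\Sigma_i$'' is not local, and preserving global leaves is not the same as preserving the germ foliation. The argument therefore has to be phrased in terms of plaques. Two points of $\Sigma_i$ should be declared equivalent when they are joined inside $V$ by the local $\C^k$-action with small times, i.e.\ by a path of $\Phi(T,\cdot)$ with $T$ in a small ball. The computation of Step 3 must then be checked at that level. Since $h$ moves points only by local flow times $t\in B$, it sends locally equivalent points to locally equivalent points, because flow times add under composition of commuting flows.

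To make this clean, I would fix $V$ small enough to be a Stefan--Sussmann foliated neighbourhood in which local plaques are well defined. Then the germs at $p$ of the two plaque partitions correspond exactly under $h$, which is the asserted biholomorphic equivalence.
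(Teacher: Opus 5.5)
Your argument is correct and is essentially the paper's proof: your Step~2 is the paper's verification that $\pi_2|_{\widetilde{\Sigma}_2}$ is a biholomorphism, and your map $h=\pi_1|_{\Sigma_2}$ is exactly the inverse of the paper's $F(y)=\Psi(a(y),y)$, which the paper obtains as the time-one map of the vertical field $Y=\sum_j a_j\,\partial/\partial t_j$ with $\Psi_*Y=\sum_j a_jX_j$ (your direct appeal to the slice property makes that vector field unnecessary). For the paper's notion of the transverse foliation (connected components of $\mathcal{L}\cap\Sigma_i$), your Step~3 already completes the proof, since a homeomorphism carrying $\mathcal{L}\cap\Sigma_2$ onto $\mathcal{L}\cap\Sigma_1$ carries components onto components; the plaque discussion in your last paragraph is extra care rather than a missing step.
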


\begin{proof}
%	Let $\Psi:B\times\Sigma_1\longrightarrow V$ be a local product chart provided by Lemma~\ref{lem:Local product chart}. Using the local product chart $\Psi$, we identify $\Sigma_1$ with the zero section $\{0\}\times\Sigma_1$.
	Let $\Psi:B\times\Sigma_1\longrightarrow V$ be the local product chart provided by Lemma~\ref{lem:Local product chart}, and identify $\Sigma_1$ with the zero section $\{0\}\times\Sigma_1$.
	Let
	$$
		\widetilde{\Sigma}_2:=\Psi^{-1}(\Sigma_2)\subset B\times\Sigma_1.
	$$

	Since $d\Psi_{(0,p)}$ identifies the tangent space to the fibers of 
	\[
		\pi_2:B\times\Sigma_1\longrightarrow\Sigma_1
	\]
	with $T_p\leaf{p} $, the transversality of $\Sigma_2$ to $\leaf{p}$ implies that $\widetilde{\Sigma}_2$ is transverse at $(0,p)$ to the fibers of $\pi_2$.
	Equivalently, the restriction 
	\[
		\pi_2|_{\widetilde{\Sigma}_2}: \widetilde{\Sigma}_2 \longrightarrow \Sigma_1
	\]
	has invertible differential at $(0,p)$.

%	By the holomorphic inverse function theorem, after shrinking if necessary, $\pi_2|_{\widetilde{\Sigma}_2}$ is a biholomorphism onto a neighborhood of $p$ in $\Sigma_1$.
	After shrinking $\Sigma_1$ and $\widetilde{\Sigma}_2$ if necessary, the holomorphic inverse function theorem implies that $\pi_2|_{\widetilde{\Sigma}_2}$ is a biholomorphism onto a neighborhood of $p$ in $\Sigma_1$.
%	Therefore $\widetilde{\Sigma}_2$ is the graph of a unique holomorphic map 
%	$$ 
%		a:\Sigma_1\longrightarrow B, \qquad a(p)=0.
%	$$
%	Thus
%	$$
%		\widetilde{\Sigma}_2 = \{(a(y),y):y\in\Sigma_1\}.
%	$$
	Therefore, $\widetilde{\Sigma}_2$ is the graph of a unique holomorphic map $a:\Sigma_1\longrightarrow B$, with $a(p)=0$; namely,
	\[
		\widetilde{\Sigma}_2 = \{(a(y),y):y\in\Sigma_1\}.
	\]
	Define the vertical holomorphic vector field 
	\[
		Y = \sum_{j=1}^q a_j(y)\frac{\partial}{\partial t_j}
	\]
	on $B\times\Sigma_1$.
	Observe that $Y(0,p)=0$, so its flow fixes the point $(0,p)$.

	Since $Y$ has no component in the $\Sigma_1$-direction, the coordinate $y$ remains constant along every integral curve of $Y$.
	Consequently, the coefficients $a_j(y)$ remain constant along each integral curve.
%	Hence, if $(t_0,y)$ is the initial point of an integral curve, its solution is 
%	$$ 
%		t(s)=t_0+s\,a(y), \qquad y(s)=y.
%	$$
%	Hence, the integral curve through $(t_0,y)$ is given by $t(s)=t_0+s\,a(y)$ and $y(s)=y$.
%	Therefore the time-one map of the flow of $Y$ is
%	$$
%		(t,y)\longmapsto(t+a(y),y).
%	$$
%	In particular, it sends the zero section
%	$$
%		\{0\}\times\Sigma_1
%	$$
%	biholomorphically onto the graph of $a$, namely onto $\widetilde{\Sigma}_2$.
	Therefore, the time-one map of the flow of $Y$ is given by $(t,y)\mapsto(t+a(y),y)$, and its restriction to the zero section $\{0\}\times\Sigma_1$ is a biholomorphism onto the graph of $a$, which is precisely $\widetilde{\Sigma}_2$.

%	It remains to show that the vector field $\Psi_*Y$ is tangent to the ambient Stefan--Sussmann foliation.
%	Let 
%	\[
%		\Phi_j^s=\exp(sX_j)
%	\]
% 	denote the local flow of $X_j$.
%	Since the action is abelian, the vector fields $X_1,\ldots,X_q$ commute, and therefore so do their flows.

It remains to prove that $\Psi_*Y$ is tangent to the ambient
Stefan--Sussmann foliation. For $j=1,\ldots,q$, let
$\Phi_j^s=\exp(sX_j)$ denote the local flow of $X_j$. Since the action
is abelian, the fundamental vector fields $X_1,\ldots,X_q$ commute,
and hence so do their local flows wherever the corresponding
compositions are defined.

%	Fix $j$.
%	Using the commutativity of the flows, write 
%	$$ 
%		\Psi(t,y) = \Phi_j^{t_j} \circ \Phi_1^{t_1} \circ\cdots\circ \widehat{\Phi_j^{t_j}} \circ\cdots\circ \Phi_q^{t_q}(y).
%	$$
%	If
%	$$
%		z = \Phi_1^{t_1} \circ\cdots\circ \widehat{\Phi_j^{t_j}} \circ\cdots\circ \Phi_q^{t_q}(y),
%	$$
%	then
%	$$
%		\Psi(t,y)=\Phi_j^{t_j}(z).
%	$$
%	Differentiating with respect to $t_j$, we obtain
%	$$
%		\frac{\partial\Psi}{\partial t_j}(t,y) = \frac{d}{ds}\bigg|_{s=t_j}\Phi_j^s(z) = X_j(\Phi_j^{t_j}(z)) = X_j(\Psi(t,y)).
%	$$
%	Equivalently,
%	$$
%		\Psi_*\!\left(\frac{\partial}{\partial t_j}\right)=X_j.
%	$$
 
Fix $j\in\{1,\ldots,q\}$. By commutativity of the flows, we may write
\[
    \Psi(t,y)=\Phi_j^{t_j}(z_j),
    \qquad
    z_j:=
    \Phi_1^{t_1}\circ\cdots\circ
    \widehat{\Phi_j^{t_j}}\circ\cdots\circ
    \Phi_q^{t_q}(y),
\]
where $z_j$ is independent of $t_j$. Therefore,
\[
    \frac{\partial\Psi}{\partial t_j}(t,y)
    =
    \left.\frac{d}{ds}\right|_{s=t_j}\Phi_j^s(z_j)
    =
    X_j\bigl(\Phi_j^{t_j}(z_j)\bigr)
    =
    X_j\bigl(\Psi(t,y)\bigr).
\]
Equivalently,
\[
    \Psi_*\!\left(\frac{\partial}{\partial t_j}\right)=X_j.
\]

%	To simplify notation, we continue to denote the coefficient functions $a_j\circ\pi_2\circ\Psi^{-1}$ simply by $a_j$.
%	Thus 
%	\[
%		\Psi_*Y = \sum_{j=1}^q a_jX_j,
%	\]
% 	which is a holomorphic linear combination of fundamental vector fields.
%	Therefore $\Psi_*Y$ is tangent to the Stefan--Sussmann foliation.

To simplify notation, we continue to write $a_j$ for the coefficient
function $a_j\circ\pi_2\circ\Psi^{-1}$. It follows that
\[
    \Psi_*Y=\sum_{j=1}^q a_jX_j.
\]
Since the coefficients $a_j$ are holomorphic and the $X_j$ are
fundamental vector fields, $\Psi_*Y$ is tangent to the ambient
Stefan--Sussmann foliation.

%	Since every integral curve of $\Psi_*Y$ is contained in an ambient leaf, its time-one flow preserves every ambient leaf.
%	Consequently, its restriction defines a biholomorphism of germs 
%	$$ 
%		F:(\Sigma_1,p)\longrightarrow(\Sigma_2,p).
%	$$
%	Moreover, for every ambient leaf  $\leaf{}$ meeting the neighborhood, $F$ maps each connected component of
%	$$
%		\leaf{}\cap\Sigma_1
%	$$
%	onto a connected component of
%	$$
%		\leaf{}\cap\Sigma_2.
%	$$
%	Hence $F$ identifies the transverse orbit foliations on $\Sigma_1$ and $\Sigma_2$ as germs at $p$.
	
Since $\Psi_*Y$ is tangent to the ambient foliation, each of its
integral curves remains in a single ambient leaf. Hence its time-one
flow preserves every ambient leaf and restricts to a biholomorphism of
germs
\[
    F:(\Sigma_1,p)\longrightarrow(\Sigma_2,p).
\]

Moreover, for every ambient leaf $\leaf{}$ meeting the neighborhood,
$F$ maps each connected component of $\leaf{}\cap\Sigma_1$ onto a
connected component of $\leaf{}\cap\Sigma_2$. Therefore, $F$ identifies
the germs at $p$ of the transverse orbit foliations on $\Sigma_1$ and
$\Sigma_2$.	
\end{proof}

The following is immediate from the theorem above:

\begin{corollary}
	Up to germ biholomorphism, the transverse orbit foliation is independent of the choice of Stefan--Sussmann transversal.
\end{corollary}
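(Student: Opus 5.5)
The corollary is a reformulation of Proposition~\ref{Uniqueness transverse foliation}, so I would derive it directly from that result rather than reprove anything. Fix $p\in\C^n$ and let $\Sigma_1,\Sigma_2$ be any two Stefan--Sussmann transversals at $p$; such transversals exist by the existence lemma above. The \emph{transverse orbit foliation} on $\Sigma_i$ is, as a germ at $p$, the partition of a neighborhood of $p$ in $\Sigma_i$ into connected components of $\leaf{}\cap\Sigma_i$, with $\leaf{}$ ranging over ambient leaves. The claim is that the germ-biholomorphism class of this partition does not depend on $i$.

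First, apply Proposition~\ref{Uniqueness transverse foliation} to the pair $(\Sigma_1,\Sigma_2)$. This gives, after shrinking, a germ biholomorphism $F:(\Sigma_1,p)\to(\Sigma_2,p)$. It is the restriction of the time-one map of the flow of $\Psi_*Y=\sum_j a_jX_j$, so it carries components of $\leaf{}\cap\Sigma_1$ onto components of $\leaf{}\cap\Sigma_2$.

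Second, check that "equivalence up to germ biholomorphism" is an equivalence relation on the set of transversals at $p$, so that the class is well defined. Reflexivity is given by the identity. Symmetry follows because $F^{-1}$ also maps plaques to plaques: $F$ is a bijection between the two partitions, leaf by leaf. Transitivity follows by composing such germ maps on a common shrunken domain. Since any two transversals are related by the proposition, all of them lie in one class, which is exactly the corollary.

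The only point needing care is that the proposition involves shrinking. Equivalence must therefore be understood at the level of germs, and the partition on a shrunken transversal must be the restriction of the original partition; this holds by definition of the plaques as components of $\leaf{}\cap\Sigma_i$ near $p$. I expect no genuine obstacle, since the corollary follows immediately once the notion of germ equivalence of these partitions is stated precisely.
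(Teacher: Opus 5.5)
Your proposal is correct and is essentially the paper's argument. The paper records the corollary as an immediate consequence of Proposition~\ref{Uniqueness transverse foliation}, applied to an arbitrary pair of transversals at $p$. Your equivalence-relation check is harmless but not needed, since the proposition already relates every pair directly.

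One small inaccuracy is in your last paragraph. Shrinking $\Sigma_i$ can split a component of $\leaf{}\cap\Sigma_i$, so the partition on the shrunken transversal refines the restricted partition rather than equalling it. This does not affect the conclusion: $F$ and $F^{-1}$ are the time-$\pm1$ maps of a flow tangent to the orbits, so they preserve every ambient leaf. Hence they carry components of $\leaf{}\cap U$ onto components of $\leaf{}\cap F(U)$ for every open $U\subset\Sigma_1$ on which $F$ is defined.
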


\bigskip

\subsection{The transverse pseudogroup}
%The transverse orbit foliation admits a canonical description in terms of a local pseudogroup of biholomorphisms of a Stefan--Sussmann transversal, in the spirit of Haefliger's transverse geometry for regular foliations.
%
%Let $\Sigma_p$ be a Stefan--Sussmann transversal at $p$.
%Define the \emph{full transverse orbit pseudogroup} $\mathcal H_p$  to be the pseudogroup of germs of local biholomorphisms $h:U\to\Sigma_p$ such that, for every $x\in U$, the points $x$ and $h(x)$ belong to the same connected component of $\leaf{x}\cap\Sigma_p$.
%Equivalently, $\mathcal H_p$ consists of all local biholomorphisms of $\Sigma_p$ that preserve each leaf of the transverse orbit foliation setwise.
%This is the natural singular analogue of the holonomy pseudogroup of a regular foliation.

The transverse orbit foliation admits a canonical local description in
terms of a pseudogroup of biholomorphisms of a Stefan--Sussmann
transversal, in the spirit of Haefliger's transverse geometry for
regular foliations.

Let $\Sigma_p$ be a Stefan--Sussmann transversal at $p$. Define the
\emph{full transverse orbit pseudogroup} $\mathcal H_p$ as the
pseudogroup of all local biholomorphisms
\[
    h:U\longrightarrow V,
\]
where $U,V\subset\Sigma_p$ are open, such that, for every $x\in U$, the
points $x$ and $h(x)$ belong to the same connected component of
$\leaf{x}\cap\Sigma_p$. Equivalently, $\mathcal H_p$ consists of all
local biholomorphisms of $\Sigma_p$ that preserve the leaves of the
transverse orbit foliation. It is the natural singular analogue of the
holonomy pseudogroup of a regular foliation.

Let $\Psi:B\times\Sigma_p\longrightarrow V$ be a local product chart,
and define the associated holomorphic projection by
\[
    \rho:=\pi_2\circ\Psi^{-1}:V\longrightarrow\Sigma_p.
\]
 
For each sufficiently small $T\in\C^k$, let
$\Phi_T(z):=\Phi(T,z)$ denote the corresponding local action map.
Whenever $U\subset\Sigma_p$ is open and satisfies $\Phi_T(U)\subset V$,
define
\[
    h_T:=\rho\circ\left.\Phi_T\right|_U:
    U\longrightarrow\Sigma_p.
\]

%Since $\Phi_T$ maps each orbit $\leaf{x}$ into itself and preserves which connected component of $\leaf{x}\cap\Sigma_p$ a point belongs to (for $T$ sufficiently small), each $h_T$ belongs to $\mathcal H_p$.
%We call the maps $\{h_T\}$ the \emph{elementary action transformations}; they form a distinguished subpseudogroup of $\mathcal H_p$.

For $T$ sufficiently small, each point $x\in U$ and its image $h_T(x)$ belong to the same connected component of $\leaf{x}\cap\Sigma_p$.
Hence $h_T\in\mathcal H_p$.
We call the maps $h_T$ obtained in this way the \emph{elementary action transformations}. The pseudogroup they generate is a distinguished subpseudogroup of $\mathcal H_p$.

%\begin{proposition}
%	\textup{(i)}
%	The orbits of the subpseudogroup generated by the elementary action transformations $\{h_T\}$ are precisely the connected components of the transverse orbit foliation on $\Sigma_p$.
%
%	\textup{(ii)}
%	Consequently, the orbits of the full transverse orbit pseudogroup $\mathcal H_p$ are also precisely those connected components.
%\end{proposition}

\begin{proposition}
The orbits of the pseudogroup generated by the elementary action
transformations are precisely the leaves of the transverse orbit
foliation on $\Sigma_p$. Consequently, the full transverse orbit
pseudogroup $\mathcal H_p$ has the same orbits.
\end{proposition}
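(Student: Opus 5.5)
We prove two inclusions: every pseudogroup orbit lies inside a single transverse leaf, and every transverse leaf is contained in a single pseudogroup orbit. Here a \emph{leaf of the transverse orbit foliation} on $\Sigma_p$ means a connected component of $\leaf{x}\cap\Sigma_p$, with $\Sigma_p$ shrunk to the domain of a local product chart $\Psi:B\times\Sigma_p\to V$ from Lemma~\ref{lem:Local product chart}.

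\emph{First inclusion.} This is essentially contained in the construction. For sufficiently small $T$, each $h_T$ sends $x$ to a point in the same connected component of $\leaf{x}\cap\Sigma_p$; this is exactly the statement that $h_T\in\mathcal H_p$. Inverses and compositions of such germs keep the same property. Hence the orbit of $x$ under the generated pseudogroup is contained in the transverse leaf through $x$. Since the elementary transformations lie in $\mathcal H_p$, and elements of $\mathcal H_p$ also preserve transverse leaves by definition, the same inclusion holds for $\mathcal H_p$.

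\emph{Second inclusion.} This is the substantive part. Fix a transverse leaf $C\subset\leaf{x}\cap\Sigma_p$. We show that the pseudogroup orbit of $x$ is open and closed in $C$; connectedness of $C$ then gives equality. The key local fact is the following. Let $y\in C$ and $q'=q(y)$. Choose a neighborhood $W$ of $y$ in $C$ small enough that every $y'\in W$ has the form $y'=\rho(\Phi_T(y))$ with $T$ small; then $y'=h_T(y)$.

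To establish this local fact:
\begin{enumerate}
\item Near $y$, the leaf $\leaf{y}$ is covered by $\Phi_T(y)$ with $T$ in a small ball. This uses that $\leaf{y}\cong\C^k/G_y$ is an injectively immersed orbit, together with the local product chart around $y$.
\item The map $\rho$ restricted to $\leaf{y}\cap V$ acts as the identity on $\leaf{y}\cap\Sigma_p$.
\item Since $y'$ lies in the same component as $y$ and is close to it, we can write $y'=\Phi_T(y)$ for a small $T$.
\end{enumerate}
Openness of the orbit in $C$ then follows. Closedness follows by applying the same argument at a limit point and using the inverse maps $h_{-T}$.

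\textbf{Main obstacle.} The delicate point is step (3). Because leaves are only immersed, the intrinsic leaf topology may differ from the subspace topology: a leaf can return arbitrarily close, as in Remark~\ref{rem:same-support}. For this reason we will work with the leaf topology on $C$, interpreting its connected components intrinsically, as the paper's definition implicitly does. We then argue via the orbit map $\C^k\to\leaf{y}$, which is a local diffeomorphism onto the leaf in its intrinsic topology. This gives $T$ near $0$ directly, and the path-connectedness of $C$ in the leaf topology completes the argument.
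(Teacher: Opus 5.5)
Your proof is correct and is essentially the paper's argument. The paper uses the same local fact: points of $\leaf{y}$ near $y$ in the leaf topology are of the form $\Phi_T(y)$ with $T$ small, and $\rho|_{\Sigma_p}=\mathrm{id}$ turns this into $h_T(y)=\Phi_T(y)$. You spread it through the component by an open--closed argument, where the paper chains along a path $\gamma$ using compactness, and your explicit choice of the leaf topology is what the paper's subdivision step tacitly uses.

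Two small fixes are needed. For the first inclusion, do not just cite the assertion made before the proposition; supply, as the paper does, the path $t\mapsto\rho(\Phi(tT,x))$, which lies in $\leaf{x}\cap\Sigma_p$ because the fibres of $\rho$ lie in orbits. Also, the orbit map $\C^k\to\leaf{y}$ is a submersion rather than a local diffeomorphism when $q(y)<k$, though openness of that map is all you actually use.
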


\begin{proof}
It suffices to prove the first assertion. Indeed, the pseudogroup generated by the elementary action transformations is contained in $\mathcal H_p$, so each of its orbits is contained in an $\mathcal H_p$-orbit. 
On the other hand, by definition, every element of $\mathcal H_p$ maps each point to another point in the same leaf of the transverse orbit foliation. 
Thus, once the first assertion is established, the orbits of $\mathcal H_p$ coincide with those leaves.

Let $\mathcal E_p$ denote the pseudogroup generated by the elementary
action transformations. We first show that the $\mathcal E_p$-orbit
of each $x\in\Sigma_p$ is precisely the connected component of
$\leaf{x}\cap\Sigma_p$ containing $x$.

Let $h_T$ be an elementary action transformation defined on an open
subset $U\subset\Sigma_p$, and let $x\in U$. After shrinking $U$ and
the parameter neighborhood if necessary, the path
\[
    [0,1]\longrightarrow\Sigma_p,
    \qquad
    t\longmapsto\rho\bigl(\Phi(tT,x)\bigr)
\]
is well defined. Since $\Phi(tT,x)\in\leaf{x}$ for every $t$ and the
fibers of $\rho$ are contained in ambient orbits, this path lies in
$\leaf{x}\cap\Sigma_p$. Its endpoints are $x$ and
$h_T(x)=\rho\bigl(\Phi(T,x)\bigr)$. Thus, every elementary action
transformation maps each point into the same connected component of
its transverse leaf. The same is therefore true for every element of
$\mathcal E_p$.

Conversely, let $y$ belong to the connected component of
$\leaf{x}\cap\Sigma_p$ containing $x$, and choose a path
\[
    \gamma:[0,1]\longrightarrow\leaf{x}\cap\Sigma_p
\]
such that $\gamma(0)=x$ and $\gamma(1)=y$.

For each $s\in[0,1]$, consider the orbit map
\[
    \theta_{\gamma(s)}:\C^k\longrightarrow\leaf{x},
    \qquad
    T\longmapsto\Phi(T,\gamma(s)).
\]
Its differential at $0$ has rank equal to the dimension of the orbit.
After restricting to a complex linear complement of the infinitesimal
isotropy at $\gamma(s)$, the holomorphic submersion theorem gives a
local holomorphic parametrization of the orbit near $\gamma(s)$.
Consequently, there exists a neighborhood $W_s$ of $\gamma(s)$ in
$\leaf{x}$ such that any two points of $W_s$ are related by an action
parameter $T$ sufficiently close to $0$.

By compactness of $\gamma([0,1])$, we may choose a subdivision
\[
    0=s_0<s_1<\cdots<s_N=1
\]
such that, for each $i$, the points $\gamma(s_i)$ and
$\gamma(s_{i+1})$ lie in one of these neighborhoods. Hence there
exists $T_i\in\C^k$, sufficiently close to $0$, such that
\[
    \Phi(T_i,\gamma(s_i))=\gamma(s_{i+1}).
\]
After shrinking its domain if necessary, the elementary action
transformation $h_{T_i}=\rho\circ\Phi_{T_i}$ is defined near
$\gamma(s_i)$. Since $\rho|_{\Sigma_p}=\mathrm{id}$, we obtain
\[
    h_{T_i}\bigl(\gamma(s_i)\bigr)
    =
    \gamma(s_{i+1}).
\]
It follows that $y$ is obtained from $x$ by a finite composition of
elementary action transformations. Therefore, the orbits of
$\mathcal E_p$ are precisely the leaves of the transverse orbit
foliation.

Finally, $\mathcal E_p\subset\mathcal H_p$, so every
$\mathcal E_p$-orbit is contained in an $\mathcal H_p$-orbit. On the
other hand, by definition, every element of $\mathcal H_p$ preserves
each leaf of the transverse orbit foliation. Hence the
$\mathcal H_p$-orbits are contained in those leaves and therefore
coincide with them.

\end{proof}

%\begin{corollary}
%	Up to conjugacy by a germ of biholomorphism, the full transverse orbit pseudogroup is independent of the choice of Stefan--Sussmann transversal.
%\end{corollary}

\begin{corollary}
The conjugacy class of the full transverse orbit pseudogroup is
independent of the choice of Stefan--Sussmann transversal.
\end{corollary}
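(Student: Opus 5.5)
The plan is to transport the pseudogroup along the biholomorphism of germs $F:(\Sigma_1,p)\to(\Sigma_2,p)$ built in Proposition~\ref{Uniqueness transverse foliation}. Let $\mathcal H^{(1)}_p$ and $\mathcal H^{(2)}_p$ denote the full transverse orbit pseudogroups on $\Sigma_1$ and $\Sigma_2$. I will show that
\[
h\longmapsto F\circ h\circ F^{-1}
\]
maps germs in $\mathcal H^{(1)}_p$ to germs in $\mathcal H^{(2)}_p$. The inverse map $h'\mapsto F^{-1}\circ h'\circ F$ will work symmetrically. This gives a conjugacy of pseudogroups, defined after shrinking the transversals to the domain and range of $F$.

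First, recall the key property of $F$. It is the restriction to $\Sigma_1$ of the time-one map of $\Psi_*Y=\sum_j a_jX_j$. So for every $x\in\Sigma_1$ there is a path $s\mapsto \exp(s\Psi_*Y)(x)$, $s\in[0,1]$, contained in the ambient leaf $\leaf{x}$. It joins $x$ to $F(x)$.

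Second, I will upgrade the statement ``$F$ maps components of $\leaf{}\cap\Sigma_1$ onto components of $\leaf{}\cap\Sigma_2$'' to the precise form needed: if $x,x'$ lie in the same connected component of $\leaf{x}\cap\Sigma_1$, then $F(x),F(x')$ lie in the same connected component of $\leaf{x}\cap\Sigma_2$. Since $F$ is a homeomorphism onto its image and maps $\leaf{}\cap\Sigma_1$ into $\leaf{}\cap\Sigma_2$ (each point moves within its ambient leaf), it carries connected subsets of $\leaf{}\cap\Sigma_1$ to connected subsets of $\leaf{}\cap\Sigma_2$. This gives the claim. The same argument applies to $F^{-1}$, which is the restriction of the time-minus-one flow of the same vector field, composed through $\Psi$.

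Third, I will verify the defining property. Let $h\in\mathcal H^{(1)}_p$ and $u\in F(U)$, and put $x=F^{-1}(u)$. Then $x$ and $h(x)$ lie in the same component of $\leaf{x}\cap\Sigma_1$. By the second step, $u=F(x)$ and $F(h(x))$ lie in the same component of $\leaf{u}\cap\Sigma_2$, using $\leaf{u}=\leaf{x}$. Moreover $F\circ h\circ F^{-1}$ is a local biholomorphism between open subsets of $\Sigma_2$. Hence $F h F^{-1}\in\mathcal H^{(2)}_p$. Compatibility with composition, inverses and restriction is formal.

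The main obstacle is the component bookkeeping after shrinking. Restricting to smaller neighborhoods of $p$ can split components of $\leaf{}\cap\Sigma_i$, so the statement must be read at the level of germs at $p$. To handle this, I will fix neighborhoods $\Sigma_1'\subset\Sigma_1$ and $\Sigma_2'=F(\Sigma_1')$ on which $F$ is a biholomorphism. I will then define both pseudogroups relative to these neighborhoods; the bijection $F$ respects leaf intersections there, so no splitting mismatch occurs. As a consistency check I would also note that $F$ conjugates elementary action transformations $h_T$ into compositions of elements of $\mathcal H^{(2)}_p$. This is not needed for the corollary.
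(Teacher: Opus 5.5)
Your proposal is correct and follows the paper's proof. Like the paper, you conjugate by the germ biholomorphism $F$ from Proposition~\ref{Uniqueness transverse foliation}, verify pointwise that $F\circ h\circ F^{-1}$ keeps each point in its transverse leaf, and apply the same argument to $F^{-1}$ for the reverse inclusion. Your extra step (continuity of $F$ together with the fact that it moves each point within its ambient leaf) and your fixing of the shrunken neighborhoods make explicit what the paper compresses into ``as $F$ identifies the two transverse foliations.''
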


\begin{proof}
Let $\Sigma_1$ and $\Sigma_2$ be Stefan--Sussmann transversals at $p$,
with full transverse orbit pseudogroups $\mathcal H_1$ and
$\mathcal H_2$, respectively. By
Proposition~\ref{Uniqueness transverse foliation}, after shrinking the
transversals if necessary, there exists a biholomorphism
\[
    F:(\Sigma_1,p)\longrightarrow(\Sigma_2,p)
\]
that identifies their transverse orbit foliations.

Let $h\in\mathcal H_1$ be a local biholomorphism, and consider its
conjugate $F\circ h\circ F^{-1}$. For every point $y$ in its domain,
write $y=F(x)$. Since $h\in\mathcal H_1$, the points $x$ and $h(x)$
belong to the same leaf of the transverse orbit foliation on
$\Sigma_1$. As $F$ identifies the two transverse foliations, the points
\[
    y=F(x)
    \qquad\text{and}\qquad
    (F\circ h\circ F^{-1})(y)=F(h(x))
\]
belong to the same leaf of the transverse orbit foliation on
$\Sigma_2$. Hence
$F\circ h\circ F^{-1}\in\mathcal H_2$, and therefore
\[
    F\mathcal H_1F^{-1}\subset\mathcal H_2.
\]

Applying the same argument to $F^{-1}$ yields the reverse inclusion.
Thus
\[
    F\mathcal H_1F^{-1}=\mathcal H_2.
\]
\end{proof}

%
%
%
%\begin{proof}
%	Let $\Sigma_1$ and $\Sigma_2$ be two Stefan--Sussmann transversals at $p$, with full transverse orbit pseudogroups $\mathcal H_1$ and $\mathcal H_2$.
%	By Proposition~\ref{Uniqueness transverse foliation}, after shrinking if necessary, there exists a biholomorphism of germs 
%	\[
%		F:(\Sigma_1,p)\longrightarrow(\Sigma_2,p)
%	\]
%	which identifies the transverse orbit foliations.
%
%	Let $h\in\mathcal H_1$ be a local element.
%	By definition of $\mathcal H_1$, the map $h$ satisfies: for every $x$ in its domain, $x$ and $h(x)$ belong to the same connected component of $\leaf{x}\cap\Sigma_1$.
%	Since $F$ maps connected components of $\leaf{x}\cap\Sigma_1$ bijectively onto connected components of $\leaf{F(x)} \cap\Sigma_2$, the conjugate 
%	\[
%		F\circ h\circ F^{-1}
%	\]
%	sends each point $y\in\Sigma_2$ in its domain to a point in the same connected component of $\leaf{y}\cap\Sigma_2$.
%	By definition of $\mathcal H_2$, this means $F\circ h\circ F^{-1}\in\mathcal H_2$.
%	Thus 
%	$$ 
%		F\mathcal H_1F^{-1}\subset \mathcal H_2.
%	$$
%	Applying the same argument to $F^{-1}$ gives the reverse inclusion.
%	Therefore 
%	$$ 
%		F\mathcal H_1F^{-1}=\mathcal H_2.
%	$$
%	Thus the full transverse orbit pseudogroup is independent of the choice of
%	transversal, up to conjugacy by a germ of biholomorphism.
%\end{proof}

\bigskip
The results of this section show that, under the maximal-rank hypotheses considered above, the orbit foliation associated with a diagonal holomorphic action carries an intrinsic local transverse geometry.
More precisely, although its construction involves the choice of a Stefan--Sussmann transversal and a local product chart, the resulting transverse foliation and its associated full transverse orbit pseudogroup are well defined up to germ biholomorphism.
Thus, to every point of the ambient singular foliation, one may naturally associate a germ of transversally holomorphic singular foliation, which provides a local transverse model for the orbit decomposition.

This intrinsic transverse geometry provides the natural framework for the study of local transverse invariants.
In particular, it opens the way to the construction of analytic transverse models and to the investigation of the local leaf space of the orbit foliation, topics that will be developed elsewhere.

\section*{Acknowledgments}

The authors acknowledge the use of large language models for proofreading, verifying calculations, and improving the clarity of the exposition. All mathematical ideas, arguments, and results are the authors’ own, and the authors take full responsibility for the content of the manuscript.

The authors also acknowledge financial support from PAPIIT Projects IN103324 and IN104826 (DGAPA, UNAM, Mexico).

\end{document}